\numberwithin{equation}{section}
\newtheorem{Definition}{Definition}[section]
\newtheorem{Proposition}[Definition]{Proposition}
\newtheorem{Lemma}[Definition]{Lemma}
\newtheorem{Theorem}[Definition]{Theorem}
\theoremstyle{definition}
\newtheorem{Remark}[Definition]{Remark}
\newcommand{\R}{\mathbb{R}}
\newcommand{\mm}{{\mbox{\boldmath$m$}}}
\newcommand{\ggamma}{{\mbox{\boldmath$\gamma$}}}
\newcommand{\sggamma}{{\mbox{\scriptsize\boldmath$\gamma$}}}
\newcommand{\sfd}{{\sf d}}
\newcommand{\sfh}{{\sf h}}
\newcommand{\Kliminf}{K\kern-3pt-\kern-2pt\mathop{\rm lim\,inf}\limits}  
\newcommand{\supp}{\mathop{\rm supp}\nolimits}   
\newcommand{\Lip}{\mathop{\rm Lip}\nolimits}          
\renewcommand{\d}{{\mathrm d}}
\newcommand{\dt}{{\d t}}
\newcommand{\ddt}{{\frac \d\dt}}
\newcommand{\restr}[1]{\lower3pt\hbox{$|_{#1}$}} 
\newcommand{\la}{\left<}                  
\newcommand{\ra}{\right>}
\newcommand{\eps}{\varepsilon}  
\newcommand{\nchi}{{\raise.3ex\hbox{$\chi$}}}
\newcommand{\fr}{\penalty-20\null\hfill$\blacksquare$}                      
\newcommand{\adm}{{\rm{Adm}}}                   
\newcommand{\prob}[1]{\mathscr P(#1)}                   
\newcommand{\probt}[1]{\mathscr P_2(#1)}                   
\renewcommand{\mm}{\mathfrak m}                                
\renewenvironment{proof}{\removelastskip\par\medskip   
\noindent{\em proof} \rm}{\penalty-20\null\hfill$\square$\par\medbreak}
\newcommand{\testipp}[1]{{\rm Test}^{\infty}_{>0}(#1)}
\newcommand{\X}{{\rm X}}
\newcommand{\h}{{\sfh}}
\renewcommand{\ae}{{\textrm{\rm{-a.e.}}}}
\newcommand{\RCD}{{\sf RCD}}
\newcommand{\lip}{{\rm lip}}
\newcommand{\HS}{{\lower.3ex\hbox{\scriptsize{\sf HS}}}}
\newcommand{\Y}{{\rm Y}}
\newcommand{\hr}{{\sf r}}
\newcommand{\hR}{{\sf R}}
\title{Benamou-Brenier and duality formulas for the entropic cost on $\RCD^*(K,N)$ spaces}
\author{Nicola Gigli \thanks{SISSA, Trieste. email: ngigli@sissa.it} \quad Luca Tamanini \thanks{Institut f\"ur Angewandte Mathematik, Universit\"at Bonn. email: tamanini@iam.uni-bonn.de}}
\begin{document}

\maketitle

\begin{abstract}
In this paper we prove that, within the framework of $\RCD^*(K,N)$ spaces with $N<\infty$, the entropic cost (i.e.\ the minimal value of the Schr\"odinger problem) admits:
\begin{itemize}
\item[-] a threefold dynamical variational representation, in the spirit of the Benamou-Brenier formula for the Wasserstein distance;
\item[-] a Hamilton-Jacobi-Bellman dual representation, in line with Bobkov-Gentil-Ledoux and Otto-Villani results on the duality between Hamilton-Jacobi and continuity equation for optimal transport;
\item[-] a Kantorovich-type duality formula, where the Hopf-Lax semigroup is replaced by a suitable `entropic' counterpart.
\end{itemize}
We thus provide a complete and unifying picture of the equivalent variational representations of the Schr\"odinger problem (still missing even in the Riemannian setting) as well as a perfect parallelism with the analogous formulas for the Wasserstein distance.
\end{abstract}

\tableofcontents

\section{Introduction}

It is well-known that the optimal transport problem with quadratic cost admits two equivalent formulations: the Kantorovich dual one \cite{Kantorovich42} and the Benamou-Brenier dynamical one \cite{BenamouBrenier00}. Given two compactly supported probability measures $\mu_0 = \rho_0\mathcal{L}^d,\mu_1 = \rho_1\mathcal{L}^d$ in $\R^d$, the former tells us that the squared Wasserstein distance between $\mu_0$ and $\mu_1$ can be represented as
\begin{equation}\label{eq:kantdual}
\frac{1}{2} W_2^2(\mu_0,\mu_1) = \sup_{\phi \in C_b(\R^d)}\int \phi\,\d\mu_0 + \int\phi^c\,\d\mu_1
\end{equation}
with $\phi^c$ the $c$-conjugate of $\phi$, namely
\[
\phi^c(x) := \inf_{y \in \R^d}\frac{|x-y|^2}{2} - \phi(y).
\]
On the other hand, J.-D.\ Benamou and Y.\ Brenier observed that the optimal transport problem admits a fluid-dynamics interpretation in the following sense:
\begin{equation}\label{eq:smoothbb}
W_2^2(\mu_0,\mu_1) = \inf_{(\rho,v)}\iint_0^1 |v(x,t)|^2\rho(x,t)\,\dt\d x
\end{equation}
where the infimum runs over all couples $(\rho,v)$ solving the continuity equation
\[
\partial_t\rho + {\rm div}(\rho v) = 0, \qquad \textrm{in } \R^d \times (0,1)
\]
with marginal constraints $\rho(\cdot,0) = \rho_0$ and $\rho(\cdot,1) = \rho_1$ in $\R^d$.

Although the two results may appear completely different in spirit at a first glance, as a matter of fact they are deeply linked, since it is possible to pass directly from the one to the other, gaining a further variational representation of the Wasserstein distance. This relies on the duality between Hamilton-Jacobi and continuity equation, hidden in \eqref{eq:smoothbb} and already noticed by Benamou and Brenier. Indeed, the Hamilton-Jacobi equation 
\[
\partial_t\phi + \frac{1}{2}|\nabla\phi|^2 = 0, \qquad \textrm{in } \R^d \times (0,1)
\]
already arises in the optimality conditions for $v$ in \eqref{eq:smoothbb}, which read as $v = \nabla\phi$ with $\phi$ solving the PDE above. More generally, if $(\phi_t)$ is a subsolution to the Hamilton-Jacobi equation and $(\rho,v)$ is a solution to the continuity equation, then
\[
\int\phi(\cdot,1)\,\d\mu_1 - \int\phi(\cdot,0)\,\d\mu_0 \leq \frac{1}{2}\iint_0^1 |v(x,t)|^2 \rho(x,t)\dt\d x
\]
and, as first noticed by F.\ Otto and C.\ Villani in \cite{OttoVillani00} and by S.\ Bobkov, I.\ Gentil and M.\ Ledoux in \cite{BoGeLe01}, if we saturate the left-hand side with the supremum and the right-hand one with the infimum, then the inequality turns out to be an equality, which in particular yields
\begin{equation}\label{eq:hjdual}
\frac{1}{2} W_2^2(\mu_0,\mu_1) = \sup\int\phi(\cdot,1)\,\d\mu_1 - \int\phi(\cdot,0)\,\d\mu_0
\end{equation}
where the supremum runs over all subsolutions to the Hamilton-Jacobi equation with initial condition $\phi(\cdot,0) = \phi_0$ for all possible $\phi_0 \in C_b(\R^d)$. Now it is sufficient to recall that maximal subsolutions to the Hamilton-Jacobi equation are obtained via the Hopf-Lax semigroup $Q_t$, defined for any $f : \R^d \to \R \cup \{+\infty\}$ and $t>0$ as
\[
Q_t f(x) := \inf_{y \in \R^d}\frac{|x-y|^2}{2t} + f(y).
\]
This allows us to restate \eqref{eq:hjdual} as
\begin{equation}\label{eq:kantdual2}
\frac{1}{2} W_2^2(\mu_0,\mu_1) = \sup_{\phi \in C_b(\R^d)}\int Q_1\phi\,\d\mu_1 - \int\phi\,\d\mu_0,
\end{equation}
which is equivalent to \eqref{eq:kantdual} by the very definition of the Hopf-Lax semigroup, thus closing the loop.

The great interest in the study of metric (measure) spaces (we refer to \cite{Heinonen07} for an overview on the topic and detailed bibliography) led subsequently the focus on the possible generalization of these results. The essentially metric nature of optimal transport with quadratic cost, of the Hopf-Lax formula and of the Hamilton-Jacobi equation (up to replace $|\nabla\phi|$ with $\lip\phi$) suggests that \eqref{eq:kantdual2} should hold on rather general metric spaces and this is actually the case, as shown in \cite{AmbrosioGigliSavare11} and \cite{GoRoSa14}. For the Benamou-Brenier formula, the Hamilton-Jacobi duality \eqref{eq:hjdual} and the continuity equation, on the contrary, the natural framework is the one of metric measure spaces and in \cite{Kuwada10}, \cite{Gigli-Kuwada-Ohta10} the duality between Hamilton-Jacobi and continuity equation is pointed out, whereas in \cite{GigliHan13} the non-smooth analogue of \eqref{eq:smoothbb} is established. Adopting the language developed by the first author in \cite{Gigli14}, we first say that a curve $(\mu_t)\subset \probt\X$ is a solution of the continuity equation
\[
\ddt\mu_t+{\rm div}(X_t\mu_t)=0,
\]
where $t \mapsto X_t \in L^0(T\X)$ is a family of vector fields, possibly defined only for a.e.\ $t \in [0,1]$, provided it is weakly continuous, the map $t \mapsto \int |X_t|^2\d\mu_t$ belongs to $L^1(0,1)$, $\mu_t \leq C\mm$ for all $t \in [0,1]$ for some $C>0$ and for any $f\in W^{1,2}(\X)$ the map $[0,1]\ni t\mapsto\int f\,\d\mu_t$ is absolutely continuous with
\[
\ddt\int f\,\d\mu_t=\int \d f(X_t)\,\d\mu_t\qquad {\rm a.e.}\ t.
\]
With this premise, if $(\X,\sfd,\mm)$ is infinitesimally Hilbertian and $\mu,\nu \in \probt\X$ are such that there exists a $W_2$-geodesic $(\mu_t)$ connecting them such that $\mu_t \leq C\mm$ for all $t \in [0,1]$ for some $C>0$ (the statement in \cite{GigliHan13} is slightly more general), then
\[
W_2^2(\mu,\nu) = \min\int_0^1\int |X_t|^2\d\mu_t\dt,
\]
where the minimum is taken among all solutions $(\mu_t,X_t)$ of the continuity equation such that $\mu_0 = \mu$ and $\mu_1 = \nu$.

\bigskip

Formally similar to optimal transport but with completely different motivation and interpretation, the Schr\"odinger problem is an optimization and interpolation problem too. While optimal transport was originally formulated by G.\ Monge for engeneering purposes, such as resource allocation, on the contrary Schr\"odinger problem is physical in nature, as in trying to explain the wave-particle duality via a classical mechanics example, E.\ Schr\"odinger landed in a maximal likelihood problem. In both cases two probability measures $\mu_0,\mu_1$ are assigned as data, but while in optimal transport they are seen as initial and final configurations of resources whose transportation cost has to be minimized among all possible couplings $\ggamma$ between $\mu_0$ and $\mu_1$, in Schr\"odinger problem $\mu_0$ and $\mu_1$ represent initial and final probability distributions of diffusive particles and one looks for the most likely evolution from $\mu_0$ to $\mu_1$. Let us briefly describe what this means in the Euclidean setting.

Given two probability measures $\mu_0=\rho_0\mathcal L^d$, $\mu_1=\rho_1\mathcal L^d$ on $\R^d$, one looks for a coupling between them that takes into account the fact that the particles are driven by a diffusion process. As shown in \cite{Follmer88} (see also \cite{Leonard14} for a detailed explanation), this amounts to solve the following minimization problem
\begin{equation}\label{eq:schrodinger}
\inf_{\sggamma \in \adm(\mu_0,\mu_1)} H(\ggamma\,|\,\hR)
\end{equation}
where $H(\cdot\,|\,\cdot)$ denotes the Boltzmann-Shannon entropy and $\d\hR(x,y) = \hr_{1/2}(x,y)\mathcal{L}^{2d}$, $\hr_{1/2}$ being the heat kernel at time $t=1/2$ (the time choice plays no special role, but is convenient in computations). It turns out that in great generality this problem admits a unique solution $\ggamma$ and the structure of the minimizer is very rigid: indeed $\ggamma = f \otimes g\,\hR$ for some Borel functions $f,g : \R^d \to [0,\infty)$, where $f\otimes g(x,y) := f(x)g(y)$. As a consequence
\[
\rho_0=f\,\h_{1/2}g\qquad\qquad\qquad\qquad\rho_1=g\,\h_{1/2}f,
\]
where $\h_tf$ is the heat flow starting at $f$ evaluated at time $t$. This suggests us to interpolate from $\rho_0$ to $\rho_1$ by defining
\[
\rho_t:=\h_{t/2}f\,\h_{(1-t)/2}g.
\]
This is called entropic interpolation, in analogy with displacement one. Introducing the Schr\"odinger potentials $\varphi_t,\psi_t$ (in connection with Kantorovich ones) as
\[
\varphi_t := \log \h_{t/2}f \qquad\qquad\qquad\qquad \psi_t := \log \h_{(1-t)/2}g,
\]
the parallelism between optimal transport and Schr\"odinger problem can be fully appreciated. Indeed, by direct computation it is not difficult to see that $(\varphi_t),(\psi_t)$ solve the Hamilton-Jacobi-Bellman equations
\[
\partial_t\varphi_t = \frac{1}{2}|\nabla\varphi_t|^2 + \frac{1}{2}\Delta\varphi_t \qquad\qquad\qquad\qquad -\partial_t\psi_t = \frac{1}{2}|\nabla\psi_t|^2 + \frac{1}{2}\Delta\psi_t,
\]
and they are linked to $(\rho_t)$ via the Fokker-Planck equations
\[
-\partial_t\rho_t + \rm{div}(\nabla\varphi_t\,\rho_t) = \frac{1}{2}\Delta\rho_t \qquad\qquad\qquad \partial_t\rho_t + \rm{div}(\nabla\psi_t\,\rho_t) = \frac{1}{2}\Delta\rho_t.
\]
Thus, denoting by $\mathscr{I}(\mu_0,\mu_1)$ the minimal value of \eqref{eq:schrodinger}, the entropic analogue of the Benamou-Brenier formula is
\begin{equation}\label{eq:intbbs}
\begin{split}
\mathscr{I}(\mu_0,\mu_1) & = H(\mu_0\,|\,\mathcal{L}^d) + \inf_{(\nu^+,v^+)}\int_0^1\int\frac{|v^+_t|^2}{2}\d\nu^+_t\dt \\ & = H(\mu_1\,|\,\mathcal{L}^d) + \inf_{(\nu^-,v^-)}\int_0^1\int\frac{|v^-_t|^2}{2}\d\nu^-_t\dt
\end{split}
\end{equation}
where the infimum is taken among all suitable weak solutions of the forward Fokker-Planck equation in the first case and of the backward one in the second case, with marginal constraints $\nu^\pm_0 = \mu_0$ and $\nu^\pm_1 = \mu_1$. If we also introduce the functions $\vartheta_t := \frac{\psi_t-\varphi_t}{2}$ it is not hard to check that it holds
\[
\partial_t\rho_t + \rm{div}(\nabla\vartheta_t\,\rho_t) = 0,
\]
and a third Benamou-Brenier formula for $\mathscr{I}(\mu_0,\mu_1)$ is available, namely
\begin{equation}\label{eq:intbbs2}
\mathscr{I}(\mu_0,\mu_1) = \frac{1}{2}\Big(H(\mu_0\,|\,\mathcal{L}^d) + H(\mu_1\,|\,\mathcal{L}^d)\Big) + \inf_{(\eta,v)}\bigg\{\iint_0^1\Big( \frac{|v_t|^2}{2} + \frac{1}{8}|\nabla\log\eta_t|^2 \Big)\eta_t\dt\d\mm\bigg\}
\end{equation}
where the infimum now runs over all suitable weak solutions of the continuity equation with marginal constraints $\eta_0\mathcal{L}^d = \mu_0$ and $\eta_1\mathcal{L}^d = \mu_1$. This has been first realised in \cite{Leonard14}, \cite{ChGePa16} and then extended to a slightly more general setting in \cite{GLR15}, moving from closely related results contained in \cite{Nelson67}, \cite{Follmer88}, \cite{Nagasawa93} and the subsequent literature. A heuristic discussion can be found also in \cite{Leger17}.

As concerns the entropic analogue of Kantorovich duality, the natural guess is then to replace solutions of the Hamilton-Jacobi equation with those of the Hamilton-Jacobi-Bellman equation in \eqref{eq:hjdual} and thus to substitute the Hopf-Lax formula with a suitable semigroup providing us with solutions of the latter PDE. This is given by
\begin{equation}\label{eq:semigroup}
\tilde{Q}_t \phi(x) := \log\big(\h_t e^\phi\big), \qquad \forall \phi \in C_b(\R^d)
\end{equation}
and thus \eqref{eq:hjdual} becomes
\begin{equation}\label{eq:inthj}
\mathscr{I}_\eps(\mu_0,\mu_1) = H(\mu_0\,|\,\mathcal{L}^d) + \sup\bigg\{\int \phi_1\,\d\mu_1 - \int \phi_0\,\d\mu_0 \bigg\},
\end{equation}
as shown in \cite{MikamiThieullen06}, where the supremum is taken among all supersolutions to the backward Hamilton-Jacobi-Bellman equation with final condition $\phi(\cdot,1) = \phi_1$ for all possible $\phi_1 \in C_b^\infty(\R^d)$, while \eqref{eq:kantdual2} turns into
\begin{equation}\label{eq:intks}
\mathscr{I}(\mu_0,\mu_1) = H(\mu_0\,|\,\mathcal{L}^d) + \sup_{\phi \in C_b(\R^d)}\bigg\{\int\phi\,\d\mu_1 - \int \tilde{Q}_1\phi\,\d\mu_0\bigg\},
\end{equation}
as proved in \cite{GLR15}. Both \eqref{eq:inthj} and \eqref{eq:intks} are forward representations and thus admit backward counterparts. Truth to be told, in \cite{MikamiThieullen06} and in the subsequent work \cite{MikamiThieullen08} the Schr\"odinger problem is not explicitly mentioned; nonetheless, a direct link between \eqref{eq:intbbs} and \eqref{eq:intks} is established.

As for optimal transport, also for the Schr\"odinger problem it is reasonable to investigate what can be said in the non-smooth setting. In fact, the construction of entropic interpolation and Schr\"odinger potentials can be done in great generality, as only a heat kernel is needed. In this sense, in the recent works \cite{GigTam17} and \cite{GigTam18} the authors brought the Schr\"odinger problem to finite-dimensional $\RCD^*(K,N)$ spaces, obtaining new (even in the Euclidean setting) uniform bounds for the densities of the entropic interpolations and the local Lipschitz constants of the Schr\"odinger potentials that will be recalled in Section \ref{sec:2.2}. 

Still, the generalization of \eqref{eq:intbbs}, \eqref{eq:intbbs2}, \eqref{eq:inthj} and \eqref{eq:intks} to the non-smooth setting has not been achieved yet, not even on smooth Riemannian manifolds (except for a partial result obtained for \eqref{eq:intks} in \cite{GRST17}, where Kantorovich duality for general transport costs is established in the metric setting and applies to the Schr\"odinger problem in the case the space is assumed to be compact). As concerns the Benamou-Brenier formulas for the entropic cost, this is essentially due to the fact that in \cite{MikamiThieullen06}, \cite{GLR15} and \cite{ChGePa16} a more or less probabilistic approach is always adopted: either via stochastic control techniques or (as it is in \cite{GLR15}) by strongly relying on Girsanov's theorem. On the contrary, we propose here a purely analytic proof which fits to the $\RCD$ framework, thus extending the previous results and including, in particular, the relevant case of Riemannian manifolds; as a further advantage, with slight modifications the same argument allows us to obtain the Hamilton-Jacobi-Bellman duality \eqref{eq:inthj} and, as a direct corollary, the Kantorovich-type duality formula \eqref{eq:intks} for the entropic cost, that were also missing in the Riemannian setting. This will be achieved as follows:
\begin{itemize}
\item[-] \eqref{eq:intbbs2} and \eqref{eq:intbbs} are proved in Theorems \ref{thm:main1} and \ref{thm:main1bis} respectively;
\item[-] \eqref{eq:inthj} is established in Theorem \ref{thm:duality};
\item[-] \eqref{eq:intks} is shown to hold in Theorem \ref{thm:main2}.
\end{itemize}
We thus provide a complete and unifying picture of the equivalent variational representations of the Schr\"odinger problem as well as a perfect parallelism with the analogous formulas for the Wasserstein distance. If we replace $\hr_{1/2}$ by $\hr_{\eps/2}$ in \eqref{eq:schrodinger}, denote by $\mathscr{I}_\eps(\mu_0,\mu_1)$ the minimal value of the associated problem and rescale properly Fokker-Planck and Hamilton-Jacobi-Bellman equations, this can be summarized as follows

\medskip

\begin{center}
\small
\begin{tabular}{|c|c|c|}
\hline \rule{0pt}{2.8ex}
$\,$ & Optimal transport & Schr\"odinger problem \\ 
\rule[-1.4ex]{0pt}{0pt} $\,$ & $W_2^2(\mu_0,\mu_1)/2$ & $\eps\mathscr{I}_\eps(\mu_0,\mu_1)$ \\ 
\hline \rule{0pt}{4.5ex} 
Primal problem & \multirow{2}{*}{$\displaystyle{\inf_{\sggamma \in \adm(\mu_0,\mu_1)} \int\frac{|x-y|^2}{2}\d\ggamma(x,y)}$} & \multirow{2}{*}{$\displaystyle{\inf_{\sggamma \in \adm(\mu_0,\mu_1)} \eps H(\ggamma\,|\,\hR)}$} \\ \rule[-2.8ex]{0pt}{0pt} Static version & $\,$ & $\,$ \\ 
\hline \rule{0pt}{4.8ex} \rule[-2.8ex]{0pt}{0pt}
$\,$ & \multirow{4}{*}[-2em]{$\displaystyle{\inf_{CE}\iint_0^1 \frac{|v_t|^2}{2}\rho_t\,\dt\d\mathcal{L}^d}$} & $\displaystyle{\eps H_0 + \inf_{fFP}\int_0^1\int\frac{|v^+_t|^2}{2}\d\nu^+_t\dt}$ \\ Primal problem & $\,$ & $\displaystyle{\eps H_1 + \inf_{bFP}\int_0^1\int\frac{|v^-_t|^2}{2}\d\nu^-_t\dt}$ \\ Dynamical version & $\,$ & $\displaystyle{\frac{\eps}{2}\big(H_0 + H_1\big) + \inf_{CE}\iint_0^1\Big( \frac{|v_t|^2}{2}}$ \\ \rule[-2.8ex]{0pt}{0pt} $\,$ & & $\displaystyle{+ \frac{1}{8}|\nabla\log\eta_t|^2 \Big)\eta_t\dt\d\mathcal{L}^d}$ \\ 
\hline 
Dual problem & \multirow{2}{*}[-0.8em]{$\displaystyle{\sup_{\phi \in C_b(\R^d)}\int Q_1\phi\,\d\mu_1 - \int\phi\,\d\mu_0}$} & \rule{0pt}{4ex} $\displaystyle{\eps H_0 + \sup_{\phi \in C_b(\R^d)}\int\phi\,\d\mu_1 - \int \tilde{Q}_1\phi\,\d\mu_0}$ \\ Static version & & \rule[-3.5ex]{0pt}{0pt} $\displaystyle{\eps H_1 + \sup_{\phi \in C_b(\R^d)}\int\phi\,\d\mu_0 - \int \tilde{Q}_1\phi\,\d\mu_1}$ \\
\hline 
Dual problem & \multirow{2}{*}[-0.8em]{$\displaystyle{\sup_{HJ}\int\phi(\cdot,1)\,\d\mu_1 - \int\phi(\cdot,0)\,\d\mu_0}$} & \rule{0pt}{4ex} $\displaystyle{\eps H_0 + \sup_{bHJB}\int \phi(\cdot,1)\,\d\mu_1 - \int \phi(\cdot,0)\,\d\mu_0 }$ \\ Dynamical version & & \rule[-3.5ex]{0pt}{0pt} $\displaystyle{\eps H_1 + \sup_{fHJB}\int \tilde{\phi}(\cdot,0)\,\d\mu_0 - \int \tilde{\phi}(\cdot,1)\,\d\mu_1 }$ \\
\hline 
\end{tabular}
\end{center}

\medskip

\noindent where $H_0 := H(\mu_0\,|\,\mathcal{L}^d)$, $H_1 := H(\mu_1\,|\,\mathcal{L}^d)$ and CE, FP, HJ, HJB, f and b are short-hand notations for continuity equation, Fokker-Planck, Hamilton-Jacobi, Hamilton-Jacobi-Bellman, forward and backward respectively.

\bigskip

The choice of replacing $\hr_{1/2}$ with $\hr_{\eps/2}$ in \eqref{eq:schrodinger} is motivated by the fact that optimal transport and Schr\"odinger problem are intertwined by an even stronger link, as one can guess from letting $\eps \downarrow 0$ in the right-hand column above. Indeed, the Monge-Kantorovich probem can be seen as the zero-noise limit of rescaled Schr\"odinger problems. The basic idea is that if the heat kernel admits the asymptotic expansion $\eps\log \hr_\eps(x,y)\sim -\frac{\sfd^2(x,y)}{2}$ (in the sense of Large Deviations), then the rescaled entropy functionals $\eps H(\cdot\,|\, \hR_\eps)$ converge to $\frac12\int \sfd^2(x,y)\,\d \cdot $ (in the sense of $\Gamma$-convergence). This has been obtained by Mikami in \cite{Mikami04} for the quadratic cost on $\R^d$, later on by Mikami-Thieullen \cite{MikamiThieullen08} for more general cost functions and finally by L\'eonard \cite{Leonard12} for Polish spaces and general diffusion processes (we refer to \cite{Leonard14} for a deeper discussion of this topic, historical remarks and much more). For this reason and to highlight the rescaling factor, throughout the paper we shall always make $\eps$ explicit.

\section{Preliminaries}

\subsection{Analysis and optimal transport in \texorpdfstring{$\RCD$}{RCD} spaces}

By $C([0,1],(\X,\sfd))$, or simply $C([0,1],\X)$, we denote the space of continuous curves with values on the metric space $(\X,\sfd)$. For the notion of {\bf absolutely continuous curve} in a metric space and of {\bf metric speed} see for instance Section 1.1 in \cite{AmbrosioGigliSavare08}. The collection of absolutely continuous curves on $[0,1]$ is denoted $AC([0,1],(\X,\sfd))$, or simply by $AC([0,1],\X)$.

By $\prob\X$ we denote the space of Borel probability measures on $(\X,\sfd)$ and by $\probt\X \subset \prob\X$ the subclass of those with finite second moment.

\medskip

Let $(\X,\sfd,\mm)$ be a complete and separable metric measure space endowed with a Borel non-negative measure which is finite on bounded sets. 

For the definition of the {\bf Sobolev space} $W^{1,2}(\X)$ and of {\bf minimal weak upper gradient} $|D f|$ see \cite{Cheeger00} (and the works \cite{AmbrosioGigliSavare11}, \cite{Shanmugalingam00} for alternative - but equivalent - definitions of Sobolev functions). The local counterpart of $W^{1,2}(\X)$ is introduced as follows: $L^2_{loc}(\X)$ is defined as the space of functions $f \in L^0(\X)$ such that for all open set $\Omega \subset \X$ with compact closure there exists a function $g \in L^2(\X)$ such that $f = g$ $\mm$-a.e.\ in $\Omega$ and the local Sobolev space $W^{1,2}_{loc}(\X)$ is then defined as
\begin{equation}
\label{eq:3}
W^{1,2}_{loc}(\X) := \{ f \in L^0(\X) \,:\, \forall \Omega \subset\subset \X \,\,\exists g \in W^{1,2}(\X) \textrm{ s.t. } f = g \,\,\mm\textrm{-a.e. in } \Omega\}.
\end{equation}
The local minimal weak upper gradient of a function $f \in W^{1,2}_{loc}(\X)$ is denoted by $|Df|$, omitting the locality feature, and defined for all $\Omega \subset\subset \X$ as $|Df| := |Dg|$ $\mm$-a.e.\ in $\Omega$, where $g$ is as in (\ref{eq:3}). The definition does depend neither on $\Omega$ nor on the choice of $g$ associated to it by locality of the minimal weak upper gradient.

If $W^{1,2}(\X)$ is Hilbert, which from now on we shall always assume, then $(\X,\sfd,\mm)$ is said {\bf infinitesimally Hilbertian} (see \cite{Gigli12}). The language of $L^0$-normed modules (see \cite{Gigli14}) allows to introduce the {\bf differential} as a well-defined linear map $\d$ from $W^{1,2}_{loc}(\X)$ with values in $L^0(T^*\X)$, the family of (measurable) 1-forms. The dual of $L^0(T^*\X)$ as $L^0$-normed module is denoted by $L^0(T\X)$, it is canonically isomorphic to $L^0(T^*\X)$ and its elements are called vector fields; the isomorphism sends the differential $\d f$ to the gradient $\nabla f$.

After $W^{1,2}_{loc}(\X)$ we can also introduce
\[
\begin{split}
D({\rm div}_{loc}) & := \{ v \in L^0(T\X) \,:\, \forall \Omega \subset\subset \X \,\,\exists w \in D({\rm div}) \textrm{ s.t. } v = w \,\,\mm\textrm{-a.e. in } \Omega\} \\
D(\Delta_{loc}) & := \{ f \in L^0(\X) \,:\, \forall \Omega \subset\subset \X \,\,\exists g \in D(\Delta) \textrm{ s.t. } f = g \,\,\mm\textrm{-a.e. in } \Omega\}
\end{split}
\]
so that the notions of {\bf divergence} and {\bf Laplacian} can be extended by locality to locally integrable vector fields and functions respectively.

As regards the properties of $\d, {\rm div},\Delta$, the differential satisfies the following calculus rules which we shall use extensively without further notice:
\begin{align*}
|\d f|&=|D f|\quad\mm\ae&&\forall f\in S^2(\X)\\
\d f&=\d g\qquad\mm\ae\ \text{\rm on}\ \{f=g\} &&\forall f,g\in S^2(\X)\\
\d(\varphi\circ f)&=\varphi'\circ f\,\d f&&\forall f\in S^2(\X),\ \varphi:\R\to \R\ \text{Lipschitz}\\
\d(fg)&=g\,\d f+f\,\d g&&\forall f,g\in L^\infty\cap S^2(\X)
\end{align*}
where it is part of the properties the fact that $\varphi\circ f,fg\in S^2(\X)$ for $\varphi,f,g$ as above. For the divergence, the formula
\[
{\rm div}(fv)=\d f(v)+f{\rm div}(v)\qquad\forall f\in W^{1,2}(\X),\ v\in D({\rm div}),\ \text{such that}\ |f|,|v|\in L^\infty(\X)
\]
holds, where it is intended in particular that $fv\in D({\rm div})$ for $f,v$ as above, and for the Laplacian
\[
\begin{split}
\Delta(\varphi\circ f)&=\varphi''\circ f|\d f|^2+\varphi'\circ f\Delta f\\
\Delta(fg)&=g\Delta f+f\Delta g+2\la\nabla f,\nabla g\ra
\end{split}
\]
where in the first equality we assume that $f\in D(\Delta),\varphi\in C^2(\R)$ are such that $f,|\d f|\in L^\infty(\X)$ and $\varphi',\varphi''\in L^\infty(\R)$ and in the second that $f,g\in D(\Delta)\cap L^\infty(\X)$ and $|\d f|,|\d g|\in L^\infty(\X)$ and it is part of the claims that $\varphi\circ f,fg$ are in $D(\Delta)$. On $W^{1,2}_{loc}(\X)$ as well as on $D({\rm div}_{loc})$ and $D(\Delta_{loc})$ the same calculus rules hold with slight adaptations (see for instance \cite{GigTam18}).

The Laplacian $\Delta$ is the infinitesimal generator of a 1-parameter semigroup $(\h_t)$ called {\bf heat flow} (see \cite{AmbrosioGigliSavare11}). For such a flow it holds
\begin{equation}\label{eq:heatreg}
u \in L^2(\X) \qquad \Rightarrow \qquad (\h_t u) \in C([0,\infty),L^2(\X)) \cap AC_{loc}((0,\infty),W^{1,2}(\X))
\end{equation}
and for any $u \in L^2(\X)$ the curve $t \mapsto \h_t u$ is the only solution of
\[
\ddt\h_t u = \Delta\h_t u \qquad \h_t u \to u\text{ as }t\downarrow0.
\]
If moreover $(\X,\sfd,\mm)$ is an $\RCD(K,\infty)$ space (see \cite{AmbrosioGigliSavare11-2}), the following a priori estimates hold true for every $u \in L^2(\X)$ and $t > 0$:
\begin{equation}\label{eq:apriori}
\| |\nabla\h_t u| \|^2_{L^2(\X)} \leq \frac{1}{2t}\|u\|^2_{L^2(\X)} \qquad\qquad \|\Delta\h_t u\|^2_{L^2(\X)} \leq \frac{1}{2t^2} \|u\|^2_{L^2(\X)}
\end{equation}
and the {\bf Bakry-\'Emery contraction estimate} (see \cite{AmbrosioGigliSavare11-2}) is satisfied:
\begin{equation}
\label{eq:be}
|\d\h_tf|^2\leq e^{-2Kt}\h_t(|\d f|^2) \quad \mm\ae \qquad\forall f\in W^{1,2}(\X),\ t\geq 0.
\end{equation}
Furthermore if $u \in L^\infty(\X)$, then $\h_t u$ is Lipschitz on $\supp(\mm)$ for all $t>0$ and
\begin{equation}\label{eq:lipreg}
\sqrt{2\int_0^t e^{2Ks}\d s}\Lip(\h_t u) \leq \|u\|_{L^\infty(\X)}.
\end{equation}
Still within the $\RCD$ framework, there exists the {\bf heat kernel}, namely a function 
\begin{equation}
\label{eq:hk}
(0,\infty)\times \X^2\ni (t,x,y)\quad\mapsto\quad \hr_t[x](y)=\hr_t[y](x)\in (0,\infty)
\end{equation}
such that $\h_tf(x)=\int f(y)\hr_t[x](y)\,\d\mm(y)$ for all $t>0$ and for every $f \in L^2(\X)$. For every $x\in \X$ and $t>0$, $\hr_t[x]$ is a probability density; thus the heat flow can be extended to $L^1(\X)$, is {\bf mass preserving} and satisfies the {\bf maximum principle}, i.e.
\[
f\leq c\quad\mm-a.e.\qquad\qquad\Rightarrow \qquad\qquad\h_tf\leq c\quad\mm\ae,\ \forall t>0.
\]
On finite-dimensional $\RCD^*(K,N)$ spaces (introduced in \cite{Gigli12}), a well-known consequence of lower Ricci curvature bounds (see e.g. \cite{Cheeger-Colding97I}, \cite{Cheeger-Colding97II}, \cite{Cheeger-Colding97III}) is the existence of `{\bf good cut-off functions}', typically intended as cut-offs with bounded Laplacian; for our purposes the following result will be sufficient:

\begin{Lemma}\label{lem:cutoff}
Let $(\X,\sfd,\mm)$ be an $\RCD^*(K,N)$ space with $K \in \R$ and $N \in [1,\infty)$. Then for all $R>0$ and $x \in \X$ there exists a function $\nchi_R : \X \to \R$ satisfying:
\begin{itemize}
\item[(i)] $0 \leq \nchi_R \leq 1$, $\nchi_R \equiv 1$ on $B_R(x)$ and $\supp(\nchi_R) \subset B_{R+1}(x)$;
\item[(ii)] $\nchi_R \in D(\Delta) \cap L^\infty(\X)$, $|\nabla\nchi_R| \in L^\infty(\X)$, $\Delta\nchi_R \in W^{1,2} \cap L^\infty(\X)$.
\end{itemize}
Moreover, there exist constants $C,C' > 0$ depending on $K,N$ only such that
\begin{equation}\label{eq:cutbound}
\| |\nabla\nchi_R| \|_{L^{\infty}(\X)} \leq C, \qquad\qquad \|\Delta\nchi_R\|_{L^{\infty}(\X)} \leq C'.
\end{equation}
\end{Lemma}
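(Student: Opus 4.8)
The plan is to construct the cut-off function $\nchi_R$ by applying a suitably chosen smooth, compactly supported profile to a "regularized distance-like" function, with enough control on the first and second order quantities. First I would fix $x \in \X$ and $R>0$ and recall that on an $\RCD^*(K,N)$ space the Bishop-Gromov inequality gives a (local) doubling property, so the heat flow is locally ultracontractive and the standard Colding-Minicozzi/Cheeger-Colding construction of good cut-offs (see \cite{Cheeger-Colding97I}, \cite{Cheeger-Colding97II}, \cite{Cheeger-Colding97III}) applies: one first produces, on the annulus $B_{R+1}(x)\setminus B_R(x)$, a function which interpolates from $1$ to $0$ and solves (approximately) the right elliptic problem, then glues it with the constants $1$ on $B_R(x)$ and $0$ outside $B_{R+1}(x)$. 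The key analytic input is that the Laplacian-comparison/Bochner inequality available under $\RCD^*(K,N)$ produces a bound on the Laplacian of the interpolating function that depends only on $K$, $N$ and on the geometry of the annulus, which here has inner radius $\ge R$ and fixed width $1$; hence the resulting bound is uniform in $R$ and $x$, giving the constants $C,C'$ in \eqref{eq:cutbound}.

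More concretely, I would proceed as follows. Step 1: let $\nchi_R^0$ be the function provided by the good-cut-off lemma of Mondino-Naber or of the earlier Cheeger-Colding theory, adapted to $\RCD^*(K,N)$ spaces, which satisfies (i), belongs to $D(\Delta)\cap L^\infty$, has $|\nabla \nchi_R^0|\in L^\infty$ and $\Delta\nchi_R^0 \in L^\infty$, with $\||\nabla\nchi_R^0|\|_\infty + \|\Delta\nchi_R^0\|_\infty \le C(K,N)$. Step 2: to upgrade the regularity of the Laplacian to $\Delta\nchi_R \in W^{1,2}\cap L^\infty$ as required in (ii), mollify in time: set $\nchi_R := \frac{1}{\delta}\int_0^\delta \h_s \nchi_R^0 \,\d s$ for a small fixed $\delta>0$ (independent of $R$). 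By \eqref{eq:heatreg} and \eqref{eq:apriori}, $\h_s$ maps $L^\infty$-functions with $L^\infty$ Laplacian into functions with $W^{1,2}$ Laplacian — indeed $\Delta\nchi_R = \frac1\delta(\h_\delta\nchi_R^0 - \nchi_R^0)$ by the semigroup property, and $\h_\delta$ maps $L^2$ to $D(\Delta)\subset W^{1,2}$, so $\Delta\nchi_R\in W^{1,2}$; the $L^\infty$ bounds on $\nabla$ and $\Delta$ are preserved because the heat flow contracts the $L^\infty$ norm and commutes with $\Delta$ (and the Bakry-Émery estimate \eqref{eq:be} controls $|\nabla\h_s\nchi_R^0|$). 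Step 3: restore property (i) exactly. Time-mollification slightly smears the support, so instead I would run Step 1 on the pair of radii $(R+\tfrac13,R+\tfrac23)$ and only then mollify with $\delta$ small enough (depending on the heat-kernel tail bounds, hence on $K,N$ only) that $\h_s\nchi_R^0 \equiv 1$ on $B_R(x)$ and $\h_s \nchi_R^0$ is supported in $B_{R+1}(x)$ up to an exponentially small error; a final composition with a fixed smooth one-dimensional profile $\varphi\colon\R\to[0,1]$ (with $\varphi\equiv 1$ near $1$, $\varphi\equiv 0$ near $0$, $|\varphi'|,|\varphi''|$ bounded) applied to $\nchi_R$ kills the error and yields (i) on the nose while keeping (ii) and \eqref{eq:cutbound}, now with updated constants $C,C'$ still depending only on $K,N$ via $\d(\varphi\circ f) = \varphi'\circ f\,\d f$ and $\Delta(\varphi\circ f) = \varphi''\circ f|\d f|^2 + \varphi'\circ f\,\Delta f$.

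The main obstacle is Step 1: obtaining the $R$-independent $L^\infty$ bound on $\Delta\nchi_R^0$. This is exactly the content of the "good cut-off" results in the $\RCD^*(K,N)$ setting, and the point is that the construction must be carried out so that the relevant elliptic estimate sees only the width-$1$ annulus and the curvature-dimension parameters, not the (large) radius $R$ — this is where finite-dimensionality ($N<\infty$) is essential, through Bishop-Gromov volume control and the Laplacian comparison for the distance function. Once this uniform estimate is in hand, Steps 2 and 3 are routine applications of the heat-semigroup regularization and the calculus rules for $\d$ and $\Delta$ recalled above.
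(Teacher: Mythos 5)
Your proposal is correct and follows essentially the same route as the paper, which simply invokes the good cut-off Lemma 3.1 of \cite{Mondino-Naber14} and observes that, since the transition region between $\{\nchi_R=1\}$ and $\{\nchi_R=0\}$ always has width $1$, the proof given there for $R=1$ applies verbatim and yields constants depending on $K,N$ only. Your Steps 2--3 are superfluous (they essentially re-run the internal mechanism of that lemma, namely heat-flow mollification followed by composition with a smooth profile), since the cited result already provides $\Delta\nchi_R\in W^{1,2}\cap L^\infty(\X)$ together with the stated bounds.
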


The proof can be obtained following verbatim the arguments given in Lemma 3.1 of \cite{Mondino-Naber14} (inspired by  \cite{AmbrosioMondinoSavare13-2}, see also  \cite{Gigli-Mosconi14} for an alternative approach): there the authors are interested in cut-off functions such that $\nchi \equiv 1$ on $B_R(x)$ and $\supp(\nchi) \subset B_{2R}(x)$: for this reason they fix $R>0$ and then claim that for all $x \in \X$ and $0<r<R$ there exists a cut-off function $\nchi$ satisfying $(i)$, $(ii)$ and \eqref{eq:cutbound} with $C,C'$ also depending on $R$. However, as far as one is concerned with cut-off functions $\nchi$ where the distance between $\{\nchi = 0\}$ and $\{\nchi = 1\}$ is always equal to 1, the proof of \cite{Mondino-Naber14} in the case $R=1$ applies and does not affect \eqref{eq:cutbound}.

\medskip

We conclude recalling that on $\RCD^*(K,N)$ spaces with $N \in [1,\infty)$ the reference measure $\mm$ satisfies the following volume growth condition: there exists a constant $C>0$ such that
\begin{equation}\label{eq:volgrowth}
\mm(B_r(x)) \leq Ce^{Cr}, \qquad \forall x \in \X,\, r>0.
\end{equation}
For this reason we shall consider the weighted $L^2(\X,e^{-V}\mm)$ and $W^{1,2}(\X,e^{-V}\mm)$ spaces, where $V := M\sfd^2(\cdot,\bar{x})$. Indeed $e^{-V}\mm$ has finite mass for every $M>0$. For $L^2(\X,e^{-V}\mm)$ no comments are required. The weighted Sobolev space is defined as
\[
W^{1,2}(\X,e^{-V}\mm) := \{ f \in W^{1,2}_{loc}(\X) \,:\, f,|Df| \in L^2(\X,e^{-V}\mm) \}
\]
where $|Df|$ is the local minimal weak upper gradient already introduced. Since $V$ is locally bounded, $W^{1,2}(\X,e^{-V}\mm)$ turns out to coincide with the Sobolev space built over the metric measure space $(\X,\sfd,e^{-V}\mm)$, thus motivating the choice of the notation.

\subsection{The Schr\"odinger problem in \texorpdfstring{$\RCD$}{RCD} spaces}\label{sec:2.2}

Let us first recall the definition of the relative entropy functional in the case of a reference measure with possibly infinite mass (see \cite{Leonard14b} for more details). Given a $\sigma$-finite measure $\nu$ on a Polish space $(\Y,\tau)$, there exists a measurable function $W : \Y \to [0,\infty)$ such that
\[
z_W := \int e^{-W}\d\nu < +\infty.
\]
Introducing the probability measure $\nu_W := z_W^{-1}e^{-W}\nu$, for any $\sigma \in \prob\Y$ such that $\int W\d\sigma < +\infty$ the {\bf Boltzmann-Shannon entropy} is defined as
\begin{equation}\label{eq:entdef}
H(\sigma\,|\,\nu) := H(\sigma\,|\,\nu_W) - \int W\d\sigma - \log z_W
\end{equation}
where $H(\sigma\,|\,\nu_W) := \int\rho\log(\rho)\,\d\nu_W$ if $\sigma = \rho\nu_W$ and $+\infty$ otherwise; notice that Jensen's inequality and the fact that $\tilde{\nu} \in \prob\Y$ grant that $\int\rho\log(\rho)\,\d\tilde{\nu}$ is well-defined and non-negative, in particular the definition makes sense.

Because of \eqref{eq:volgrowth}, on an $\RCD^*(K,N)$ space $(\X,\sfd,\mm)$ with $K \in \R$ and $N \in [1,\infty)$ we can choose $W = \sfd^2(\cdot,\bar{x})$ in the definition above, so that $H(\cdot\,|\,\mm)$ turns out to be well-defined on $\probt\X$ and $W_2$-lower semicontinuous. If we also introduce the following measure on $\X^2$
\[
\d \hR^{\eps}(x,y):= \hr_{\eps}[x](y)\,\d\mm(x)\,\d\mm(y),
\]
where $\hr_\eps[x](y)$ is the heat kernel \eqref{eq:hk}, then the choice $W : \X^2 \to [0,\infty)$, $W(x,x') := \sfd^2(x,\bar{x}) + \sfd^2(x',\bar{x})$ entails that, given any two probability measures $\mu_0=\rho_0\mm$, $\mu_1=\rho_1\mm$ with bounded densities and supports, $H(\cdot\,|\,\hR^\eps)$ is well-defined in $\adm(\mu_0,\mu_1)$ and narrowly lower semicontinuous therein, as shown in \cite{GigTam18}.

Therefore, the minimization problem
\[
\inf_{\sggamma \in \adm(\mu_0,\mu_1)} H(\ggamma\,|\,\hR^{\eps/2}),
\]
also known as {\bf Schr\"odinger problem} (the choice of working with $\hR^{\eps/2}$ is convenient for the computations we will do later on) is meaningful. Actually, given $\mu_0,\mu_1$ as above, there exists a unique minimizer $\ggamma^\eps$ and $\ggamma^\eps = f^\eps\otimes g^\eps\hR^{\eps/2}$ for appropriate Borel functions $f,g : \X \to [0,\infty)$ which are $\mm$-a.e.\ unique up to the trivial transformation $(f,g)\to (cf,g/c)$ for some $c>0$. In addition, $f^\eps,g^\eps$ belong to $L^{\infty}(\X)$ and their supports are included in $\supp(\mu_0)$ and $\supp(\mu_1)$ respectively (cf.\ Proposition 2.1 and Theorem 2.2 in \cite{GigTam18}). Thus, the {\bf entropic cost} $\mathscr{I}_\eps$ relative to $\hR^{\eps/2}$, defined as
\[
\mathscr{I}_\eps(\mu_0,\mu_1) := \min_{\sggamma \in \adm(\mu_0,\mu_1)} H(\ggamma\,|\,\hR^{\eps/2}),
\]
is finite.
 
Now let us fix the notations that we shall use in the sequel. For any $\eps>0$ we set $\rho^\eps_0:=\rho_0$, $\rho^\eps_1:=\rho_1$, $\mu^\eps_0:=\mu_0$, $\mu^\eps_1:=\mu_1$ and 
\[
\left\{\begin{array}{l}
f^{\varepsilon}_t := \h_{\varepsilon t/2}f^{\varepsilon} \\
\\
\varphi_t^{\varepsilon} := \varepsilon\log f_t^{\varepsilon}\\
\\
\text{for }t\in(0,1]
\end{array}
\right.\qquad\qquad
\left\{\begin{array}{l}
g^{\varepsilon}_t := \h_{\varepsilon(1-t)/2}g^{\varepsilon} \\
\\
\psi_t^{\varepsilon} := \varepsilon\log g_t^{\varepsilon}\\
\\
\text{for }t\in[0,1)
\end{array}
\right.\qquad\qquad
\left\{\begin{array}{l}
\rho^{\varepsilon}_t := f^{\varepsilon}_t g^{\varepsilon}_t \\
\\
\mu^{\varepsilon}_t := \rho^{\varepsilon}_t\mm\\
\\
\vartheta^{\varepsilon}_t := \frac12({\psi^{\varepsilon}_t - \varphi^{\varepsilon}_t})\\
\\
\text{for }t\in(0,1)
\end{array}
\right.
\]
and we also define
\begin{equation}\label{eq:def01}
\begin{split}
& \varphi_0^\eps := \eps\log(f^\eps) \qquad \textrm{in }\supp(\mu_0), \\
& \psi_1^\eps := \eps\log(g^\eps) \qquad \textrm{in }\supp(\mu_1).
\end{split}
\end{equation}
As shown in \cite{GigTam18} all the functions above are well defined, $\mu^\eps_t \in \probt\X$ for every $t\in[0,1],\eps > 0$
\begin{equation}\label{eq:ent-continuity}
[0,1] \ni t \mapsto H(\mu_t^\eps\,|\,\mm) \quad \textrm{is continuous}
\end{equation}
and moreover for any $\eps > 0$ it holds:
\begin{itemize}
\item[a)] $f^\eps_t,g^\eps_t,\rho^\eps_t$ belong to $D(\Delta)$ for all $t \in \mathcal{I}$, where $\mathcal{I}$ is the respective domain of definition (for $(\rho^\eps_t)$ we pick $\mathcal{I} = (0,1)$);
\item[b)] $\varphi^\eps_t,\psi^\eps_t,\vartheta^\eps_t$ belong to $D(\Delta_{loc})$ for all $t \in \mathcal{I}$, where $\mathcal{I}$ is the respective domain of definition.
\end{itemize}
Secondly, $(f^\eps_t),(g^\eps_t),(\rho^\eps_t) \in C([0,1],L^2(\X)) \cap AC_{loc}(\mathcal{I},W^{1,2}(\X)) \cap L^\infty([0,1],L^\infty(\X))$ for any $\eps > 0$ and their time derivatives are given by the following expressions for a.e.\ $t\in[0,1]$:
\begin{equation}\label{eq:pde1}
\ddt f^\eps_t = \frac{\eps}{2}\Delta f^\eps_t \qquad \ddt g^\eps_t = -\frac{\eps}{2}\Delta g^\eps_t \qquad \ddt\rho^\eps_t + {\rm div}(\rho^\eps_t\nabla\vartheta^\eps_t) = 0.
\end{equation}
As concerns $(\varphi_t^\eps),(\psi_t^\eps),(\vartheta_t^\eps)$, for all $\mathcal{C} \subset \mathcal{I}$ compact and $\bar{x} \in \X$ there exists $M > 0$ depending on $K,N,\rho_0,\rho_1,\mathcal{C},\bar{x}$ such that they belong to $AC(\mathcal{C},W^{1,2}(\X,e^{-V}\mm))$ where $V = M\sfd^2(\cdot,\bar{x})$; their time derivatives are given by the following expressions for a.e.\ $t\in[0,1]$:
\begin{equation}\label{eq:pde2}
\begin{split}
\ddt\varphi^\eps_t & = \frac{1}{2}|\nabla\varphi^\eps_t|^2 + \frac{\eps}{2}\Delta\varphi^\eps_t \qquad -\ddt\psi^\eps_t = \frac{1}{2}|\nabla\psi^\eps_t|^2 + \frac{\eps}{2}\Delta\psi^\eps_t \\
& \ddt\vartheta^\eps_t + \frac{|\nabla\vartheta^\eps_t|^2}{2} = -\frac{\eps^2}{8}\Big(2\Delta\log\rho^\eps_t + |\nabla\log\rho^\eps_t|^2\Big).
\end{split}
\end{equation}
In addition, for every $\delta \in (0,1)$ and $\bar{x} \in \X$ there exist constants $C,C' > 0$ which depend on $K,N,\bar{x},\rho_0,\rho_1$ and $C''>0$ (depending also on $\delta$) such that
\begin{subequations}
\begin{align}
\label{eq:tail}
\rho_t^\eps & \leq C e^{-C'\sfd^2(\cdot,\bar{x})}, \qquad \mm\ae,\forall t \in [0,1], \\
\label{eq:lipcontr}
\lip(\varphi^\eps_t) + \lip(\psi^\eps_{1-t}) & \leq C''\big(1 + \sfd(\cdot,\bar{x})\big), \qquad \mm\ae,\forall t \in [\delta,1].
\end{align}
\end{subequations}
As a final remark, let us recall (Lemma 4.9 in \cite{GigTam18}) that
\begin{equation}\label{eq:finitekin}
\iint_0^1 |\nabla\varphi_t^\eps|^2\rho_t^\eps\dt\d\mm < \infty \qquad \iint_0^1 |\nabla\psi_t^\eps|^2\rho_t^\eps\dt\d\mm < \infty \qquad \iint_0^1 |\nabla\vartheta_t^\eps|^2\rho_t^\eps\dt\d\mm < \infty.
\end{equation}

\section{Lemmas and a first dynamical viewpoint on \texorpdfstring{$\mathscr{I}_\eps$}{I}}

\subsection{Solutions of continuity, Fokker-Planck, Hamilton-Jacobi-Bellman equations: definition}

We start giving the definition of `distributional' solutions of the continuity equation and of the forward/backward Fokker-Planck equation in our setting:

\begin{Definition}\label{def:fpe}
Let $(\X,\sfd,\mm)$ be an infinitesimally Hilbertian metric measure space, $t \mapsto X_t \in L^0(T\X)$ a Borel family of vector fields, possibly defined only for a.e.\ $t \in [0,1]$, and $c \geq 0$, $\sigma \in \{-1,1\}$. A curve $(\mu_t)\subset \probt\X$ is a solution of
\[
\sigma\ddt\mu_t + {\rm div}(X_t\mu_t) = c\Delta\mu_t
\]
if:
\begin{itemize}
\item[(i)] it is weakly continuous and there exists $C>0$ such that $\mu_t \leq C\mm$ for all $t \in [0,1]$;
\item[(ii)] the map $t \mapsto \int |X_t|^2\d\mu_t$ is Borel and belongs to $L^1(0,1)$;
\item[(iii)] for any $f \in D(\Delta)$ the map $[0,1] \ni t\mapsto\int f\,\d\mu_t$ is absolutely continuous and it holds
\[
\sigma\ddt\int f\,\d\mu_t = \int \Big(\d f(X_t) + c\Delta f\Big)\d\mu_t\qquad {\rm a.e.}\ t.
\]
\end{itemize}
When
\begin{itemize}
\item[-] $\sigma = 1$ and $c>0$, $(\mu_t)$ is said to be a solution of the forward Fokker-Planck equation;
\item[-] $\sigma = -1$ and $c>0$, $(\mu_t)$ is said to be a solution of the backward Fokker-Planck equation;
\item[-] $c = 0$, $(\mu_t)$ is said to be a solution of the continuity equation.
\end{itemize}
We will refer to $(X_t)$ as drift or velocity field.
\end{Definition}

Let us point out that this definition of solution of the continuity equation is consistent with the one proposed in \cite{GigliHan13} and recalled in the Introduction, because if
\[
\ddt\int f\,\d\mu_t = \int \d f(X_t)\,\d\mu_t\qquad {\rm a.e.}\ t
\]
holds for every $f \in D(\Delta)$, then it also holds for $f \in W^{1,2}(\X)$: it is sufficient to integrate the equality on $[t_0,t_1] \subset [0,1]$ and argue by density thanks to the fact that by $(ii)$
\[
\int_0^1\int |X_t|^2\d\mu_t\dt < \infty.
\]
In view of Theorem \ref{thm:duality} let us also provide a suitable notion of `strong' supersolution of the forward/backward Fokker-Planck equation.

\begin{Definition}\label{def:hjb}
Let $\sigma \in \{-1,1\}$ and $c,T>0$. A curve $[0,T] \ni t \mapsto \phi_t \in L^0(\X)$ is a supersolution of the forward (resp.\ backward) Hamilton-Jacobi-Bellman equation provided:
\begin{itemize}
\item[(i)] there exists $C>0$ such that $\|\phi_t\|_{L^\infty(\X)} \leq C$ for all $t \in [0,T]$;
\item[(ii)] $\phi_t \in D(\Delta_{loc})$ for all $t \in [0,T]$ with $(|\nabla\phi_t|),(\Delta\phi_t) \in L^\infty((0,T),L^2(\X))$.
\item[(iii)] there exist $x \in \X$ and $M>0$ such that $(\phi_t) \in AC([0,T],L^2(\X,e^{-V}\mm))$, where $V := M\sfd^2(\cdot,x)$, and its time derivative satisfies
\[
\sigma\ddt\phi_t \geq \frac{1}{2}|\nabla\phi_t|^2 + c\Delta\phi_t \qquad \textrm{for a.e. } t \in [0,T]
\]
with $\sigma = 1$ (resp.\ $\sigma = -1$).
\end{itemize}
\end{Definition}

\subsection{Technical lemmas}

In this section we collect some auxiliary results that will be used several times in the proof of the main theorems. We start with an integrability statement (a stronger result is actually true, see \cite{GigTam18}, but this is sufficient for our purposes).

\begin{Lemma}\label{lem:continuity}
With the same assumptions and notation as in Section \ref{sec:2.2}, the following holds.

For any $\eps > 0$ and $t \in \mathcal{I}$ let $h_t^\eps$ denote any of $\varphi_t^\eps,\psi_t^\eps,\vartheta_t^\eps,\log\rho_t^\eps$ and let $H_t^\eps$ denote any of the functions
\[
\rho_t^\eps h_t^\eps, \quad \rho_t^\eps |h_t^\eps|^2, \quad \rho_t^\eps |\nabla h_t^\eps|^2,
\]
where $\mathcal{I}$ is the domain of definition of $h_t^\eps$ (for $\log\rho_t^\eps$ we pick $\mathcal{I} = (0,1)$). Then $H_t^\eps \in L^1(\X)$ for every $\eps \in (0,1)$ and $t \in \mathcal{I}$ and, for any $\mathcal{C} \subset\subset \mathcal{I}$, $H_{\cdot}^\eps \in L^1(\mathcal{C} \times \X,\d t \otimes \mm)$. Moreover, $\mathcal{I} \ni t \mapsto \int H_t^\eps\,\d\mm$ is continuous.
\end{Lemma}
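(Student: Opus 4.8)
The plan is to reduce everything to pointwise and integral bounds that are already collected in Section~\ref{sec:2.2}, especially the Gaussian-type decay \eqref{eq:tail}, the Lipschitz growth \eqref{eq:lipcontr}, the global finiteness \eqref{eq:finitekin}, and the volume growth \eqref{eq:volgrowth}. First I would fix $\eps \in (0,1)$ and $t \in \mathcal{I}$, with $\mathcal{I}$ the appropriate domain, and dispose of the term $\rho_t^\eps |\nabla h_t^\eps|^2$ separately: for $h_t^\eps \in \{\varphi_t^\eps,\psi_t^\eps,\vartheta_t^\eps\}$ this is exactly one of the three finite double integrals in \eqref{eq:finitekin} after integrating in time over a compact $\mathcal{C} \subset\subset \mathcal{I}$, so Fubini gives both $\rho_t^\eps|\nabla h_t^\eps|^2 \in L^1(\X)$ for a.e.\ $t$ and membership in $L^1(\mathcal{C}\times\X)$; for the remaining cases one uses that $h_t^\eps = \pm(\psi_t^\eps + \varphi_t^\eps)/1$-type linear combinations relate $\log\rho_t^\eps = (\varphi_t^\eps + \psi_t^\eps)/\eps$ and $\vartheta_t^\eps = (\psi_t^\eps - \varphi_t^\eps)/2$, so $|\nabla\log\rho_t^\eps|^2 \lesssim_\eps |\nabla\varphi_t^\eps|^2 + |\nabla\psi_t^\eps|^2$ and the same reduction applies.

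For the terms $\rho_t^\eps h_t^\eps$ and $\rho_t^\eps |h_t^\eps|^2$ the strategy is a direct pointwise estimate. By \eqref{eq:tail} we have $\rho_t^\eps \leq C e^{-C'\sfd^2(\cdot,\bar x)}$ $\mm$-a.e., uniformly in $t \in [0,1]$. For $h_t^\eps \in \{\varphi_t^\eps,\psi_t^\eps\}$ and $t$ in a compact subinterval $[\delta,1]$ (resp.\ $[0,1-\delta]$), the Lipschitz bound \eqref{eq:lipcontr} together with the obvious bound on $h_t^\eps$ at the base point $\bar x$ — which follows because $f^\eps,g^\eps \in L^\infty$ and the heat flow is mass-preserving and order-preserving, so $|h_t^\eps(\bar x)|$ is locally bounded in $t$ — yields $|h_t^\eps(y)| \leq A(1 + \sfd^2(y,\bar x))$ for a constant $A$ depending on the usual data and the compact interval. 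Hence $\rho_t^\eps |h_t^\eps|^k \leq C A^k e^{-C'\sfd^2(\cdot,\bar x)}(1+\sfd^2(\cdot,\bar x))^k$ for $k = 1,2$, and this is $\mm$-integrable: split $\X$ into dyadic annuli $B_{r+1}(\bar x)\setminus B_r(\bar x)$, use \eqref{eq:volgrowth} to bound each annulus' measure by $Ce^{C(r+1)}$, and observe the Gaussian factor $e^{-C'r^2}$ beats the product of polynomial and exponential growth, giving a convergent series. The bounds are uniform for $t$ in the compact interval, so integrating in $t$ and applying Tonelli gives $H_\cdot^\eps \in L^1(\mathcal{C}\times\X)$. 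For $h_t^\eps = \log\rho_t^\eps$ and $h_t^\eps = \vartheta_t^\eps$ on $(0,1)$, one again reduces via the linear relations above: $|\log\rho_t^\eps| \leq \eps^{-1}(|\varphi_t^\eps| + |\psi_t^\eps|)$ and $|\vartheta_t^\eps| \leq \tfrac12(|\varphi_t^\eps| + |\psi_t^\eps|)$, so the same annular estimate applies on compact subintervals of $(0,1)$ (note $\varphi_t^\eps$ is controlled on $[\delta,1]$ and $\psi_t^\eps$ on $[0,1-\delta]$, whose intersection is $[\delta,1-\delta]$, which exhausts $(0,1)$).

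For the continuity of $t \mapsto \int H_t^\eps\,\d\mm$ the plan is dominated convergence: along any $t_n \to t$ in $\mathcal{I}$, one has $\rho_{t_n}^\eps \to \rho_t^\eps$ and $h_{t_n}^\eps \to h_t^\eps$ $\mm$-a.e.\ (up to a subsequence) from the $C([0,1],L^2(\X))$-regularity of $(\rho_t^\eps)$ and the $AC$-regularity of $(\varphi_t^\eps),(\psi_t^\eps),(\vartheta_t^\eps)$ in the weighted Sobolev spaces recalled in Section~\ref{sec:2.2}; the uniform-in-$t$ Gaussian domination established above, valid on a compact neighbourhood of $t$ in $\mathcal{I}$, supplies the $\mm$-integrable majorant. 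A standard subsequence argument then upgrades pointwise-a.e.\ convergence along subsequences to convergence of the full sequence of integrals. I expect the only genuinely delicate point to be handling $\nabla h_t^\eps$ inside the integrals $\rho_t^\eps|\nabla h_t^\eps|^2$ pointwise in $t$ — but this is sidestepped entirely by invoking \eqref{eq:finitekin} and Fubini rather than trying to bound $|\nabla h_t^\eps|$ for each fixed $t$; the pointwise estimates are needed only for the zeroth- and first-order terms $\rho_t^\eps h_t^\eps$ and $\rho_t^\eps|h_t^\eps|^2$, where \eqref{eq:tail} and \eqref{eq:lipcontr} are tailor-made for the job. The continuity statement for the gradient term, in turn, should follow either from \eqref{eq:ent-continuity} together with the PDE identities \eqref{eq:pde2} expressing the relevant integrals, or directly from the $AC_{loc}(\mathcal{I},W^{1,2})$-type regularity combined with the dominated convergence scheme once the time-integrability is in hand.
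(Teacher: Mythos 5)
Your treatment of the zeroth-order terms $\rho_t^\eps h_t^\eps$ and $\rho_t^\eps|h_t^\eps|^2$ is essentially the paper's argument (quadratic growth of $\varphi_t^\eps$ from \eqref{eq:lipcontr}, Gaussian decay \eqref{eq:tail}, volume growth \eqref{eq:volgrowth}, reduction of $\vartheta_t^\eps$ and $\log\rho_t^\eps$ to $\varphi_t^\eps,\psi_t^\eps$ by the linear identities), and your dominated-convergence scheme for continuity is also the intended one. The genuine gap is your handling of the gradient term $\rho_t^\eps|\nabla h_t^\eps|^2$. Deducing its integrability from \eqref{eq:finitekin} via Fubini only yields $\rho_t^\eps|\nabla h_t^\eps|^2\in L^1(\X)$ for \emph{almost every} $t$, whereas the lemma claims it for \emph{every} $t\in\mathcal I$; more importantly, Fubini gives no information whatsoever about the continuity of $t\mapsto\int\rho_t^\eps|\nabla h_t^\eps|^2\,\d\mm$, which is precisely the part of the statement that Proposition \ref{pro:dyn} later needs. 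Your two fallback suggestions do not close this: \eqref{eq:ent-continuity} concerns the entropy $H(\mu_t^\eps\,|\,\mm)$, not the kinetic/Fisher-type integrals, and \eqref{eq:pde2} only provides a.e.-in-$t$ identities for time derivatives in a weighted space, from which continuity of these integrals does not follow without further work. So for the gradient term you have neither the every-$t$ integrability nor the continuity.

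The fix — and this is what the paper actually does — is not to sidestep the pointwise bound on $|\nabla h_t^\eps|$ but to read it off directly from \eqref{eq:lipcontr}: since the minimal weak upper gradient is dominated $\mm$-a.e.\ by the local Lipschitz constant, \eqref{eq:lipcontr} gives $|\nabla\varphi_t^\eps|\le C''(1+\sfd(\cdot,\bar x))$ $\mm$-a.e., uniformly for $t\in[\delta,1]$ (and symmetrically for $\psi^\eps$, hence for $\vartheta^\eps$ and $\log\rho^\eps$ on $[\delta,1-\delta]$). Combined with \eqref{eq:tail} this yields
\[
\rho_t^\eps|\nabla\varphi_t^\eps|^2 \le c_1\bigl(1+\sfd^2(\cdot,\bar x)\bigr)\exp\bigl(-c_2\sfd^2(\cdot,\bar x)\bigr)\qquad\mm\ae,
\]
uniformly in $t$ on compact subintervals, and \eqref{eq:volgrowth} makes the right-hand side integrable. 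This single majorant gives at once: $L^1(\X)$ for every $t$, $L^1(\mathcal C\times\X)$ by Tonelli, and the dominating function needed to pass from the $L^0$-continuity in $t$ (coming from $(\rho_t^\eps)\in C([0,1],L^2(\X))$ and $(|\nabla h_t^\eps|)\in C(\mathcal I,L^2(\X,e^{-V}\mm))$) to continuity of the integrals — i.e.\ exactly the same scheme you already set up for the zeroth-order terms, so your bifurcated treatment (Fubini for gradients, pointwise bounds for the rest) is both unnecessary and insufficient. With this replacement the proof is complete; \eqref{eq:finitekin} is not needed at all.
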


\begin{proof}
Since $(\rho_t^\eps) \in C([0,1],L^2(\X))$ and $(h_t^\eps),(|\nabla h_t^\eps) \in C(\mathcal{I},L^2(\X,e^{-V}\mm))$, all the functions appearing in the statement are continuous from $\mathcal{I}$ to $L^0(\X)$ equipped with the topology of convergence in measure on bounded sets. Therefore the continuity of $\mathcal{I} \ni t \mapsto \int H_t^\eps\,\d\mm$ for these maps will follow as soon as we show that they are, locally in $t \in \mathcal{I}$, uniformly dominated by an $L^1(\X)$ function. This will also imply all the other statements. Furthermore, it is sufficient to consider the case $h_t^\eps = \varphi_t^\eps$, as the estimates for $\psi_t^\eps$ can be obtained by symmetric arguments and the ones for $\vartheta_t^\eps,\log\rho_t^\eps$ follow from the identities $\vartheta_t^\eps = \frac{\psi_t^\eps - \varphi_t^\eps}{2}$ and $\eps\log\rho_t^\eps = \varphi_t^\eps + \psi_t^\eps$.

From \eqref{eq:tail} and \eqref{eq:lipcontr} we immediately see that for any $\bar{x} \in \X$ and $\delta > 0$ there exist constants $c_1,c_2 > 0$ depending on $K,N,\delta,\bar{x},\rho_0,\rho_1$ only such that
\[
\rho_t^\eps|\nabla\varphi_t^\eps|^2 \leq c_1\big(1+\sfd^2(\cdot,\bar{x})\big)\exp\big(-c_2\sfd^2(\cdot,\bar{x})\big) \qquad \mm\ae
\]
for every $t \in [\delta,1]$ and $\eps \in (0,1)$. The volume growth \eqref{eq:volgrowth} then implies that the right-hand side is integrable and thus the conclusion.

For $\rho_t^\eps \varphi_t^\eps$ and $\rho_t^\eps |h_t^\eps|^2$, observe that from \eqref{eq:lipcontr} and the fact that $\X$ is a geodesic space it follows that
\[
|\varphi_t^\eps(x) - \varphi_t^\eps(\bar{x})| \leq C_{\delta}\sfd(x,\bar{x})(1+\sfd(x,\bar{x})) \leq C_{\delta}(1+\sfd^2(x,\bar{x})) \quad \forall t \in [\delta,1],
\]
which means that $\varphi_t^\eps$ has quadratic growth. We then argue as before, coupling this information with \eqref{eq:tail} and \eqref{eq:volgrowth}.
\end{proof}

The following result is in the same spirit of the previous lemma and of the reminders of Section \ref{sec:2.2}.

\begin{Lemma}\label{lem:hjb}
Let $(\X,\sfd,\mm)$ be an $\RCD^*(K,N)$ space with $K \in \R$ and $N < \infty$, $u \in L^2 \cap L^\infty(\X)$ be non-negative and $\delta > 0$. Put $\phi_t^\delta := \log(\h_t u + \delta)$ for all $t \geq 0$. Then:
\begin{itemize}
\item[(i)] there exists $C>0$ such that $\|\phi_t^\delta\|_{L^\infty(\X)} \leq C$ for all $t \geq 0$;
\item[(ii)] for all $x \in \X$ and $M>0$, $(\phi_t^\delta) \in C([0,\infty),L^2(\X,e^{-V}\mm)) \cap AC_{loc}((0,\infty),L^2(\X,e^{-V}\mm))$, where $V := M\sfd^2(\cdot,x)$, and its time derivative is given by
\begin{equation}\label{eq:hjb-eta}
\ddt\phi_t^\delta = |\nabla\phi_t^\delta|^2 + \Delta\phi_t^\delta \qquad \textrm{for a.e. } t>0;
\end{equation}
\item[(iii)] $(|\nabla\phi_t^\delta|),(\Delta\phi_t^\delta) \in L^\infty_{loc}((0,\infty),L^2(\X))$;
\item[(iv)] let $(\mu_t)_{t \geq 0} \subset \prob\X$ be weakly continuous with $\mu_t \leq C\mm$ for some $C>0$ independent of $t$, set $\eta_t := \frac{\d\mu_t}{\d\mm}$ and denote by $H_t^\delta$ any of the functions
\[
\phi_t^\delta\eta_t, \quad |\phi_t^\delta|^2\eta_t, \quad |\nabla\phi_t^\delta|\eta_t, \quad |\nabla\phi_t^\delta|^2\eta_t.
\]
Then $H_t^\delta \in L^1(\X)$ for every $t,\delta > 0$ and, for any $\mathcal{C} \subset\subset (0,\infty)$, $H_{\cdot}^\delta \in L^1(\mathcal{C} \times \X,\dt \otimes \mm)$.
\end{itemize}
\end{Lemma}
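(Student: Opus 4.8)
\textbf{Proof plan for Lemma \ref{lem:hjb}.}

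The plan is to treat the four items in order, exploiting the regularity properties of the heat flow recalled in Section \ref{sec:2.2} together with the good cut-off functions of Lemma \ref{lem:cutoff}.

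For (i), since $u \geq 0$ and $u \in L^\infty(\X)$, the maximum principle gives $0 \leq \h_t u \leq \|u\|_{L^\infty(\X)}$ $\mm$-a.e., hence $\delta \leq \h_t u + \delta \leq \|u\|_{L^\infty(\X)} + \delta$; taking logarithms yields $|\phi_t^\delta| \leq \max\{|\log\delta|,|\log(\|u\|_{L^\infty(\X)}+\delta)|\} =: C$, uniformly in $t$. For (iii), I would first note that by \eqref{eq:heatreg} and \eqref{eq:apriori} one has, for $t$ in a compact subset $[a,b]$ of $(0,\infty)$, uniform $L^2$-bounds on $|\nabla\h_t u|$ and $\Delta\h_t u$. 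Since $\h_t u + \delta \geq \delta > 0$, the chain rules for $\d$ and $\Delta$ recalled in Section 2.1 give $\nabla\phi_t^\delta = \frac{\nabla\h_t u}{\h_t u + \delta}$ and $\Delta\phi_t^\delta = \frac{\Delta\h_t u}{\h_t u + \delta} - \frac{|\nabla\h_t u|^2}{(\h_t u + \delta)^2}$, valid in $D(\Delta_{loc})$ (one multiplies by a cut-off $\nchi_R$ to stay within the hypotheses, using that $\h_t u \in W^{1,2}(\X) \cap L^\infty(\X)$ for $t>0$ and that $\h_t u$ is Lipschitz on $\supp\mm$ by \eqref{eq:lipreg}, so $|\nabla\h_t u| \in L^\infty$ locally). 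The denominators being bounded below by $\delta$, one reads off $|\nabla\phi_t^\delta| \leq \delta^{-1}|\nabla\h_t u|$ and $|\Delta\phi_t^\delta| \leq \delta^{-1}|\Delta\h_t u| + \delta^{-2}|\nabla\h_t u|^2$; the first is bounded in $L^2$ on $[a,b]$ by \eqref{eq:apriori}, and for the second one uses $\| |\nabla\h_t u|^2 \|_{L^1(\X)} = \| |\nabla\h_t u| \|_{L^2(\X)}^2$ — but since the claim is membership in $L^2(\X)$, I instead use the Bakry-\'Emery estimate \eqref{eq:be} to bound $|\nabla\h_t u|^2 \leq e^{-2Kt}\h_t(|\nabla\h_{t/2}u|^2)$ pointwise, and then $\|\h_t(|\nabla\h_{t/2}u|^2)\|_{L^2} \leq \|\,|\nabla\h_{t/2}u|\,\|_{L^\infty}\cdot\|\,|\nabla\h_{t/2}u|\,\|_{L^2}$, both factors being finite and locally bounded for $t>0$ by \eqref{eq:lipreg} and \eqref{eq:apriori}. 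This proves (iii).

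For (ii), absolute continuity of $t \mapsto \phi_t^\delta$ into $L^2(\X,e^{-V}\mm)$ on compact subsets of $(0,\infty)$ follows from $(\h_t u) \in AC_{loc}((0,\infty),W^{1,2}(\X))$ via \eqref{eq:heatreg}, the lower bound $\h_t u + \delta \geq \delta$, and the chain rule together with the fact that $e^{-V}\mm$ is a finite measure (so $L^2(\X) \hookrightarrow L^2(\X,e^{-V}\mm)$); continuity up to $t=0$ comes from $(\h_t u) \in C([0,\infty),L^2(\X))$ and dominated convergence using (i). The formula for the time derivative is obtained by differentiating $\phi_t^\delta = \log(\h_t u + \delta)$ and inserting $\ddt\h_t u = \Delta\h_t u$, which gives $\ddt\phi_t^\delta = \frac{\Delta\h_t u}{\h_t u + \delta}$; combining with the identity $\Delta\phi_t^\delta = \frac{\Delta\h_t u}{\h_t u + \delta} - |\nabla\phi_t^\delta|^2$ from above yields \eqref{eq:hjb-eta}. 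Finally (iv) is the analogue of Lemma \ref{lem:continuity}: since $\mu_t \leq C\mm$, we have $\eta_t \leq C$ $\mm$-a.e., so $|\phi_t^\delta\eta_t| \leq C^2$ restricted to the (bounded-mass, since $\mu_t$ is a probability) ... more precisely $\int |\phi_t^\delta|\eta_t\,\d\mm \leq C\mu_t(\X) = C < \infty$ by (i), similarly for $|\phi_t^\delta|^2\eta_t$; and $\int |\nabla\phi_t^\delta|\eta_t\,\d\mm \leq C\|\,|\nabla\phi_t^\delta|\,\|_{L^2(\X)}\mm(\ldots)$ — here I use Cauchy-Schwarz against $\eta_t^{1/2} \leq C^{1/2}$ and $\eta_t^{1/2} \in L^2$ (as $\mu_t$ is a probability measure) to get $\int |\nabla\phi_t^\delta|\eta_t\,\d\mm \leq C^{1/2}\|\,|\nabla\phi_t^\delta|\,\|_{L^2(\X)}$, and likewise $\int |\nabla\phi_t^\delta|^2\eta_t\,\d\mm \leq C\|\,|\nabla\phi_t^\delta|\,\|_{L^2(\X)}^2$; the local-in-$t$ integrability on $\mathcal{C}\times\X$ then follows from the local $L^2$-bounds of (iii).

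The main obstacle I anticipate is item (iii), specifically controlling $|\nabla\phi_t^\delta|^2$ (equivalently $|\nabla\h_t u|^2$) in $L^2(\X)$ rather than merely $L^1(\X)$: the naive bound only gives $L^1$, and one genuinely needs the Bakry-\'Emery contraction \eqref{eq:be} combined with the Lipschitz regularization \eqref{eq:lipreg} to upgrade to $L^2$. Everything else is a careful but routine application of the heat-flow estimates, the calculus rules, and cut-off truncation to remain within the stated function classes.
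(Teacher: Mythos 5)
Your argument is correct and follows essentially the same route as the paper: maximum principle for (i), the chain-rule bounds $|\nabla\phi_t^\delta|\le\delta^{-1}|\nabla\h_t u|$ and $|\Delta\phi_t^\delta|\le\delta^{-1}|\Delta\h_t u|+\delta^{-2}|\nabla\h_t u|^2$ combined with \eqref{eq:apriori} and \eqref{eq:lipreg} for (iii), absolute continuity in $L^2(\X,e^{-V}\mm)$ plus the chain rule for (ii), and the uniform density bound together with (i), (iii) and Fubini for (iv). The only deviation is the Bakry--\'Emery detour in (iii), which is unnecessary (and contains a harmless time-index slip, the correct form being $e^{-Kt}\h_{t/2}(|\nabla\h_{t/2}u|^2)$ rather than $e^{-2Kt}\h_t(\cdot)$): since \eqref{eq:lipreg} already gives $|\nabla\h_t u|\le\Lip(\h_t u)$ with $\Lip(\h_t u)$ locally bounded in $t>0$, one can bound $\|\,|\nabla\h_t u|^2\,\|_{L^2(\X)}\le\Lip(\h_t u)\,\|\,|\nabla\h_t u|\,\|_{L^2(\X)}$ directly, which is exactly the paper's use of \eqref{eq:apriori} and \eqref{eq:lipreg}.
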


\begin{proof}
Fix $x \in \X$, $M>0$ and let $V$ be defined as in the statement. By the maximum principle for the heat flow $\log\delta \leq \phi_t^\delta \leq \log(\|u\|_{L^\infty(\X)} + \delta)$ for all $t \geq 0$, so that
\[
\sup_{t \in [0,\infty)}\|\phi_t^\delta\|_{L^\infty(\X)} < \infty \qquad \textrm{and} \qquad (\phi_t^\delta) \in L^\infty((0,\infty),L^2(\X,e^{-V}\mm)).
\]
Since $\log$ is smooth with bounded derivatives on $[\delta,+\infty)$, the chain and Leibniz rules entail that
\[
|\nabla\phi_t^\delta| \leq \frac{|\nabla\h_t u|}{\delta} \qquad\qquad |\Delta\phi_t^\delta| \leq \frac{|\Delta\h_t u|}{\delta} + \frac{|\nabla\h_t u|^2}{\delta^2}
\]
and, by the a priori estimates \eqref{eq:apriori} and the Lipschitz regularization \eqref{eq:lipreg}, $(iii)$ follows. Furthermore, notice that \eqref{eq:heatreg} and the chain rule grant that $\mm$-a.e.\ \eqref{eq:hjb-eta} holds for a.e.\ $t$; since $(iii)$ implies in particular that $(|\nabla\phi_t^\delta|),(\Delta\phi_t^\delta) \in L^2_{loc}((0,\infty),L^2(\X,e^{-V}\mm))$, this means that $(\phi_t^\delta) \in AC_{loc}((0,\infty),L^2(\X,e^{-V}\mm))$ and \eqref{eq:hjb-eta} actually holds when the left-hand side is intended as limit of the difference quotients in $L^2(\X,e^{-V}\mm)$.

The continuity in $t=0$ follows by dominated convergence from $(i)$ and the fact that for any sequence $t_n \downarrow 0$ there exists a subsequence $t_{n_k} \downarrow 0$ such that $\h_{t_{n_k}}u \to u$ $\mm$-a.e.

Finally, given $\mathcal{C} \subset\subset (0,\infty)$, observe that from (i), (iii) and the fact that $\mu_t \leq C\mm$ for all $t \geq 0$ we get
\[
\sup_{t \in \mathcal{C}}\int |\phi_t^\delta|\eta_t\,\d\mm + \int |\phi_t^\delta|^2\eta_t\,\d\mm + \int |\nabla\phi_t^\delta|^2\eta_t\,\d\mm < \infty,
\]
whence integrability on $\mathcal{C} \times \X$ by Fubini's theorem. For $|\nabla\phi_t^\delta|\eta_t$ it is sufficient to notice that $|\nabla\phi_t^\delta|\eta_t \leq \frac{1}{2}|\nabla\phi_t^\delta|^2\eta_t + \frac{1}{2}\eta_t$ and then argue as above.
\end{proof}

We shall also make use of the following simple lemma valid on general metric measure spaces.

\begin{Lemma}\label{lem:dermiste}
Let $(\Y,\sfd_\Y,\mm_\Y)$ be an infinitesimally Hilbertian metric measure space endowed with a non-negative measure $\mm_\Y$ which is finite on bounded sets. Let $(\mu_t)$ be a solution of
\[
\sigma\ddt\mu_t + {\rm div}(X_t\mu_t) = c\Delta\mu_t
\]
in the sense of Definition \ref{def:fpe} and let $f \in D(\Delta)$. Then $t\mapsto\int f\,\d\mu_t$ is absolutely continuous and 
\begin{equation}
\label{eq:bder1}
\Big|\frac\d{\d t}\int f\,\d\mu_t \Big| \leq \int \big(|\d f||X_t| + c|\Delta f|\big)\d\mu_t\qquad{\rm a.e.}\ t\in[0,1],
\end{equation}
where the exceptional set can be chosen to be independent of $f$.

Moreover, if $(f_t)\in AC([0,1],L^2(\Y))\cap L^\infty([0,1],W^{1,2}(\Y))$ with $(\Delta f_t) \in L^\infty([0,1],L^2(\Y))$, then the map $t\mapsto\int f_t\,\d\mu_t$ is absolutely continuous and 
\begin{equation}\label{eq:derchain}
\frac\d{\d s}\Big(\int f_s\,\d\mu_s\Big)\restr{s=t}=\int \big(\frac\d{\d s}f_s\restr{s=t}\big)\,\d\mu_t + \frac\d{\d s}\Big(\int f_t\,\d\mu_s\Big)\restr{s=t}
\end{equation}
for a.e.\ $t \in [0,1]$.

If $c=0$, it is sufficient to assume $f \in W^{1,2}(\Y)$ in \eqref{eq:bder1} and $(f_t)\in AC([0,1],L^2(\Y))\cap L^\infty([0,1],W^{1,2}(\Y))$ in \eqref{eq:derchain}.
\end{Lemma}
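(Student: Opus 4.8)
The plan is to read the first assertion directly off Definition~\ref{def:fpe}(iii) and then work on the two genuinely new points: making the exceptional set in \eqref{eq:bder1} independent of $f$, and establishing the Leibniz-type rule \eqref{eq:derchain}.

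For a fixed $f\in D(\Delta)$ there is essentially nothing to prove: Definition~\ref{def:fpe}(iii) already says that $t\mapsto\int f\,\d\mu_t$ is absolutely continuous with $\sigma\ddt\int f\,\d\mu_t=\int(\d f(X_t)+c\Delta f)\,\d\mu_t$ for a.e.\ $t$, and \eqref{eq:bder1} follows by taking absolute values and using $|\d f(X_t)|\le|\d f|\,|X_t|$. I would record along the way that the right-hand side of \eqref{eq:bder1} lies in $L^1(0,1)$: by Cauchy--Schwarz and $\mu_t\le C\mm$ one has $\int|\d f|\,|X_t|\,\d\mu_t\le\sqrt C\,\||\d f|\|_{L^2(\X)}\big(\int|X_t|^2\,\d\mu_t\big)^{1/2}$, which is in $L^2(0,1)$ by Definition~\ref{def:fpe}(ii), while $\int|\Delta f|\,\d\mu_t\le\sqrt C\,\|\Delta f\|_{L^2(\X)}$.

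To make the exceptional set $f$-independent — exactly what is needed in order to later feed \eqref{eq:bder1} the moving slices $f_t$ — I would recast the Fokker--Planck identity as a pairing against a single pair of vector-valued maps. Using infinitesimal Hilbertianity, $\int(\d f(X_t)+c\Delta f)\,\d\mu_t=\langle\nabla f,v_t\rangle_{L^2(T\X)}+c\,\langle\Delta f,\eta_t\rangle_{L^2(\X)}$, where $\eta_t:=\tfrac{\d\mu_t}{\d\mm}\le C$ and $v_t:=X_t\eta_t$. Then $\|\eta_t\|_{L^2(\X)}^2\le C$ and $\|v_t\|_{L^2(T\X)}^2\le C\int|X_t|^2\,\d\mu_t$, so $t\mapsto\eta_t$ and $t\mapsto v_t$ are Bochner-integrable into $L^2(\X)$, resp.\ $L^2(T\X)$ (strong measurability by Pettis, the relevant spaces being separable). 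Let $N\subset[0,1]$ be the Lebesgue-negligible union of $\{t:\int|X_t|^2\,\d\mu_t=+\infty\}$ with the sets of non-Lebesgue points of $t\mapsto\eta_t$ and $t\mapsto v_t$; this $N$ does not depend on $f$. Integrating the identity on $[t,t+r]$ gives $\sigma\big(\int f\,\d\mu_{t+r}-\int f\,\d\mu_t\big)=\langle\nabla f,\int_t^{t+r}v_\tau\,\d\tau\rangle+c\,\langle\Delta f,\int_t^{t+r}\eta_\tau\,\d\tau\rangle$, and dividing by $r$ and letting $r\to0$ at a point $t\notin N$ shows, for every $f\in D(\Delta)$, that $\ddt\int f\,\d\mu_t$ exists there and equals $\sigma^{-1}\int(\d f(X_t)+c\Delta f)\,\d\mu_t$, hence obeys \eqref{eq:bder1}.

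Finally, for \eqref{eq:derchain} set $G(t):=\int f_t\,\d\mu_t$. Since $(f_t)\in AC([0,1],L^2(\X))$ there is $\dot f\in L^1([0,1],L^2(\X))$ with $f_t=f_0+\int_0^t\dot f_s\,\d s$ and $\tfrac{f_{t+r}-f_t}{r}\to\dot f_t$ in $L^2(\X)$ for a.e.\ $t$, while the hypotheses give $f_t\in D(\Delta)$ for a.e.\ $t$ with $\sup_t(\|f_t\|_{W^{1,2}(\X)}+\|\Delta f_t\|_{L^2(\X)})<\infty$. One first checks that $G$ is continuous (combine continuity of $t\mapsto f_t$ in $L^2(\X)$ with weak continuity of $(\mu_t)$, $\mu_t\le C\mm$, and continuity of $s\mapsto\int g\,\d\mu_s$ for $g\in L^2(\X)$), and then absolute continuity of $G$ from
\begin{align*}
|G(t_1)-G(t_0)| &\le\Big|\int(f_{t_1}-f_{t_0})\,\d\mu_{t_1}\Big|+\Big|\int f_{t_0}\,\d\mu_{t_1}-\int f_{t_0}\,\d\mu_{t_0}\Big| \\
&\le\int_{t_0}^{t_1}\Big(\sqrt C\,\|\dot f_\tau\|_{L^2(\X)}+C_2\big(\textstyle\int|X_\tau|^2\,\d\mu_\tau\big)^{1/2}+C_2\Big)\,\d\tau,
\end{align*}
where the first term is handled by $\mu_{t_1}\le C\mm$ and $\|f_{t_1}-f_{t_0}\|_{L^2(\X)}\le\int_{t_0}^{t_1}\|\dot f_\tau\|_{L^2(\X)}\,\d\tau$, the second by \eqref{eq:bder1} for the fixed function $f_{t_0}$ together with the uniform bounds, the integrand being in $L^1(0,1)$. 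For the derivative, at a.e.\ $t$ split
\[
\frac{G(t+r)-G(t)}{r}=\int\frac{f_{t+r}-f_t}{r}\,\d\mu_{t+r}+\frac1r\Big(\int f_t\,\d\mu_{t+r}-\int f_t\,\d\mu_t\Big),
\]
and let $r\to0$: the first summand tends to $\int\dot f_t\,\d\mu_t$ because $\big\|\tfrac{f_{t+r}-f_t}{r}-\dot f_t\big\|_{L^2(\X)}\to0$, $\mu_{t+r}\le C\mm$, and $s\mapsto\int g\,\d\mu_s$ is continuous for $g\in L^2(\X)$ (approximate $g$ in $L^2(\X)$ by functions in $C_b(\X)\cap L^2(\X)$ and use weak continuity of $(\mu_s)$ with $\mu_s\le C\mm$); the second tends to $\ddt\int f_t\,\d\mu_s\big|_{s=t}=\sigma^{-1}\int(\d f_t(X_t)+c\Delta f_t)\,\d\mu_t$ by the $f$-independent statement above, applied to the fixed slice $f_t$ at a time $t\notin N$. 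Adding the limits gives \eqref{eq:derchain}. For $c=0$ the same scheme works once one recalls (already noted after Definition~\ref{def:fpe}) that Definition~\ref{def:fpe}(iii) then extends to $f\in W^{1,2}(\X)$ by integrating on $[t_0,t_1]$ and approximating in $W^{1,2}(\X)$ (e.g.\ by $\h_\delta f$): one simply replaces $D(\Delta)$ by $W^{1,2}(\X)$ throughout, and every term containing $\Delta f$ — which forced the assumption $(\Delta f_t)\in L^\infty([0,1],L^2(\X))$ — disappears.

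The step I expect to be the crux is the $f$-independence of $N$: a bare density argument in $(D(\Delta),\|\cdot\|_\star)$ does not control the Lebesgue-point set of the \emph{limit} functional, and it is precisely the reformulation of the identity as a pairing with the single Bochner-integrable pair $(v_t,\eta_t)$ that makes a uniform-in-$f$ Lebesgue differentiation argument go through. A second, minor subtlety is the limit $\int\dot f_t\,\d\mu_{t+r}\to\int\dot f_t\,\d\mu_t$, which is not granted by weak convergence of the $\mu_t$ alone (as $\dot f_t$ is merely in $L^2(\X)$) and genuinely uses the uniform bound $\mu_s\le C\mm$.
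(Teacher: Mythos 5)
Your proof is correct, and its core coincides with the paper's: the first claim is read off Definition \ref{def:fpe}, the absolute continuity of $t\mapsto\int f_t\,\d\mu_t$ is obtained from the same two-term splitting of $\int f_{t_1}\,\d\mu_{t_1}-\int f_{t_0}\,\d\mu_{t_0}$, and \eqref{eq:derchain} follows from the identical difference-quotient decomposition, using strong $L^2$-convergence of $\tfrac{f_{t+r}-f_t}{r}$ together with weak convergence of $\mu_{t+r}$ and the uniform bound $\mu_s\le C\mm$ exactly as in the paper. The only point where you genuinely diverge is the $f$-independence of the exceptional set: the paper dispatches it by appealing to separability of $W^{1,2}(\Y)$ and standard approximation procedures (citing \cite{Gigli14}), whereas you rewrite the Fokker--Planck identity as a pairing with the single Bochner-integrable pair $t\mapsto(\eta_t,\,X_t\eta_t)$ in $L^2(\Y)\times L^2(T\Y)$ and differentiate at common Lebesgue points. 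Your version has the merit of being self-contained and of exhibiting the exceptional set explicitly; on the other hand, the density route you dismiss does go through, since at a Lebesgue point of $t\mapsto\bigl(\int|X_t|^2\,\d\mu_t\bigr)^{1/2}$ (to be added to the exceptional set) the difference quotients for a general $f$ are approximated uniformly in $r$ by those of a countable dense family, via the integrated form of \eqref{eq:bder1}; so your caveat about the paper's argument is overstated, but your alternative is perfectly valid.
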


\begin{proof}
The absolute continuity of $t \mapsto \int f\,\d\mu_t$ and the bound \eqref{eq:bder1} are trivial consequences of Definition \ref{def:fpe}. The fact that the exceptional set can be chosen independently of $f$ follows from the separability of $W^{1,2}(\Y)$ and standard approximation procedures, carried out, for instance, in \cite{Gigli14}.

This implies that the second derivative in the right hand side of \eqref{eq:derchain} exists for a.e.\ $t$, so that the claim makes sense. The absolute continuity of $t\mapsto\int f_t\,\d\mu_t$ follows from the fact that for any $t_0,t_1 \in [0,1]$, $t_0 < t_1$ it holds
\[
\begin{split}
\Big|\int f_{t_1}\,\d\mu_{t_1} - \int f_{t_0}\,\d\mu_{t_0}\Big| & \leq \Big|\int f_{t_1}\,\d\mu_{t_1} - \int f_{t_1}\,\d\mu_{t_0}\Big| + \Big|\int \big(f_{t_1} - f_{t_0}\big)\d\mu_{t_0}\Big| \\
& \leq \int_{t_0}^{t_1}\int\big(|\d f_{t_1}||X_t| + c|\Delta f_{t_1}|\big)\d\mu_t\dt + \iint_{t_0}^{t_1}\Big|\ddt f_t\Big|\,\d t\,\d\mu_{t_0}
\end{split}
\]
and our assumptions on $(f_t)$. Now fix a point $t$ of differentiability for $(f_t)$ and observe that the fact that $\frac{f_{t+h}-f_t}{h}$ strongly converges in $L^2(\Y)$ to $\ddt f_t$ and $\mu_{t+h}$ weakly converges to $\mu_t$ as $h \to 0$ and the densities are equibounded is sufficient to get
\[
\lim_{h\to 0}\int\frac{f_{t+h} - f_t}{h}\,\d\mu_{t+h} = \int \ddt f_t\,\d\mu_t = \lim_{h \to 0} \int \frac{f_{t+h}-f_t}{h}\,\d\mu_t.
\]
Hence the conclusion comes dividing by $h$ the trivial identity
\[
\begin{split}
\int f_{t+h}\,\d\mu_{t+h} - \int f_t\,\d\mu_t = & \int f_t\,\d\mu_{t+h} - \int f_t\,\d\mu_t + \int (f_{t+h} - f_t)\d\mu_t + \\
& \qquad + \int (f_{t+h}-f_t)\d\mu_{t+h} - \int (f_{t+h}-f_t)\d\mu_t
\end{split}
\]
and letting $h\to 0$.

The last statement is straightforward.
\end{proof}

We conclude with a threefold dynamical (but not yet variational) representation of the entropic cost.

\begin{Proposition}\label{pro:dyn}
With the same assumptions and notations as in Section \ref{sec:2.2}, for any $\eps>0$ the following holds:
\begin{equation}\label{eq:dyn}
\begin{split}
\eps\mathscr{I}_\eps(\mu_0,\mu_1) 
& = \frac{\eps}{2}\Big(H(\mu_0\,|\,\mm) + H(\mu_1\,|\,\mm)\Big) + \iint_0^1\Big( \frac{|\nabla\vartheta_t^\eps|^2}{2} + \frac{\eps^2}{8}|\nabla\log\rho_t^\eps|^2 \Big)\rho_t^\eps\dt\d\mm \\
& = \eps H(\mu_0\,|\,\mm) + \iint_0^1\frac{|\nabla\psi_t^\eps|^2}{2}\rho_t^\eps\dt\d\mm \\
& = \eps H(\mu_1\,|\,\mm) + \iint_0^1\frac{|\nabla\varphi_t^\eps|^2}{2}\rho_t^\eps\dt\d\mm.
\end{split}
\end{equation}
\end{Proposition}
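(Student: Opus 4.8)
The plan is to evaluate $\eps\mathscr{I}_\eps(\mu_0,\mu_1)$ explicitly via the known structure of the Schr\"odinger minimizer and then to rewrite the resulting \emph{static} quantity in three different \emph{dynamical} ways, integrating along the entropic interpolation a suitable time derivative. The starting point is the identity
\[
\eps\mathscr{I}_\eps(\mu_0,\mu_1)=\int\varphi_0^\eps\,\d\mu_0+\int\psi_1^\eps\,\d\mu_1,
\]
which one obtains by expanding $H(\ggamma^\eps\,|\,\hR^{\eps/2})$ according to \eqref{eq:entdef}, using that $\ggamma^\eps$ has density $f^\eps(x)g^\eps(y)$ with respect to $\hR^{\eps/2}$, that its marginals are $\mu_0,\mu_1$, and the definitions \eqref{eq:def01} together with $\rho_0=f^\eps\h_{\eps/2}g^\eps$, $\rho_1=g^\eps\h_{\eps/2}f^\eps$; all integrals are finite because $s\mapsto s\log s$ is bounded on $[0,\|f^\eps\|_{L^\infty(\X)}]$ (resp. on $[0,\|g^\eps\|_{L^\infty(\X)}]$) and $\mu_0,\mu_1$ have bounded support.

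The heart of the argument is the computation of $\ddt\int\varphi_t^\eps\,\d\mu_t^\eps$ and $\ddt\int\psi_t^\eps\,\d\mu_t^\eps$. Since $(\mu_t^\eps)$ solves the continuity equation $\ddt\rho_t^\eps+\div(\rho_t^\eps\nabla\vartheta_t^\eps)=0$ (see \eqref{eq:pde1}) in the sense of Definition \ref{def:fpe}, I would invoke Lemma \ref{lem:dermiste} in its $c=0$ form. Because $\varphi_t^\eps,\psi_t^\eps$ are only in $W^{1,2}_{loc}(\X)\cap D(\Delta_{loc})$ and grow quadratically, the honest way is to test first with $\nchi_R\varphi_t^\eps$ (and $\nchi_R\psi_t^\eps$) for the good cut-offs $\nchi_R$ of Lemma \ref{lem:cutoff} and then let $R\to\infty$, the $\nabla\nchi_R$-terms being killed by the Gaussian bound \eqref{eq:tail} and \eqref{eq:finitekin}; equivalently one may work throughout in the weighted space $(\X,\sfd,e^{-V}\mm)$. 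Using then the Hamilton--Jacobi--Bellman equations \eqref{eq:pde2}, the integration by parts $\int\Delta\varphi_t^\eps\,\rho_t^\eps\,\d\mm=-\int\langle\nabla\varphi_t^\eps,\nabla\log\rho_t^\eps\rangle\rho_t^\eps\,\d\mm$ (legitimate thanks to \eqref{eq:finitekin} and Lemma \ref{lem:continuity}) and the pointwise identities $\eps\nabla\log\rho_t^\eps=\nabla\varphi_t^\eps+\nabla\psi_t^\eps$ and $2\nabla\vartheta_t^\eps=\nabla\psi_t^\eps-\nabla\varphi_t^\eps$, every cross term cancels and one is left with
\[
\ddt\int\varphi_t^\eps\,\d\mu_t^\eps=-\frac12\int|\nabla\varphi_t^\eps|^2\rho_t^\eps\,\d\mm,\qquad \ddt\int\psi_t^\eps\,\d\mu_t^\eps=\frac12\int|\nabla\psi_t^\eps|^2\rho_t^\eps\,\d\mm
\]
for a.e. $t\in(0,1)$.

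Next I would integrate these two identities over $[\delta,1]$ and over $[0,1-\delta]$ respectively and let $\delta\downarrow0$. The right-hand sides converge by \eqref{eq:finitekin}; for the left-hand sides one uses Lemma \ref{lem:continuity}, which makes $t\mapsto\int\rho_t^\eps\psi_t^\eps\,\d\mm$ continuous up to $t=0$ and $t\mapsto\int\rho_t^\eps\varphi_t^\eps\,\d\mm$ continuous up to $t=1$, together with \eqref{eq:ent-continuity} and the relations $\varphi_0^\eps+\psi_0^\eps=\eps\log\rho_0$ on $\supp(\mu_0)$, $\varphi_1^\eps+\psi_1^\eps=\eps\log\rho_1$ on $\supp(\mu_1)$: indeed $\int\varphi_t^\eps\,\d\mu_t^\eps=\eps H(\mu_t^\eps\,|\,\mm)-\int\psi_t^\eps\,\d\mu_t^\eps\to\eps H(\mu_0\,|\,\mm)-\int\psi_0^\eps\,\d\mu_0=\int\varphi_0^\eps\,\d\mu_0$ as $t\downarrow0$, and symmetrically at $t=1$. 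This yields
\[
\int\varphi_1^\eps\,\d\mu_1-\int\varphi_0^\eps\,\d\mu_0=-\frac12\iint_0^1|\nabla\varphi_t^\eps|^2\rho_t^\eps\,\dt\,\d\mm,\qquad \int\psi_1^\eps\,\d\mu_1-\int\psi_0^\eps\,\d\mu_0=\frac12\iint_0^1|\nabla\psi_t^\eps|^2\rho_t^\eps\,\dt\,\d\mm.
\]
Combining with the first step and with $\varphi_0^\eps=\eps\log\rho_0-\psi_0^\eps$, $\psi_1^\eps=\eps\log\rho_1-\varphi_1^\eps$ on the respective supports gives the second and third representations in \eqref{eq:dyn}; averaging these two and inserting the elementary pointwise identity $\frac14\big(|\nabla\varphi_t^\eps|^2+|\nabla\psi_t^\eps|^2\big)=\frac12|\nabla\vartheta_t^\eps|^2+\frac{\eps^2}{8}|\nabla\log\rho_t^\eps|^2$ (immediate from the two gradient identities above) produces the first.

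The main obstacle is the rigorous justification of the differentiation step: the Schr\"odinger potentials are only locally Sobolev and grow quadratically, the ambient space is non-compact, and the Lipschitz estimate \eqref{eq:lipcontr} deteriorates as $t\downarrow0$ (resp. $t\uparrow1$). Making the cut-off (or weighted-space) procedure compatible with Lemma \ref{lem:dermiste}, controlling the integrability of all terms of the form $\rho_t^\eps|\nabla\varphi_t^\eps|^2$, $\rho_t^\eps|\nabla\psi_t^\eps|^2$, $\rho_t^\eps\langle\nabla\varphi_t^\eps,\nabla\psi_t^\eps\rangle$ uniformly in $t$ on compact subintervals of $(0,1)$, and checking that the boundary contributions generated by $\nabla\nchi_R$ vanish in the limit, is exactly where the estimates \eqref{eq:tail}, \eqref{eq:lipcontr}, \eqref{eq:finitekin} and the continuity statements of Lemma \ref{lem:continuity} must be combined with care; the remaining manipulations are routine algebra.
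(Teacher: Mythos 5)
Your proposal is correct and follows essentially the same route as the paper: the starting identity $\eps\mathscr{I}_\eps(\mu_0,\mu_1)=\int\varphi_0^\eps\,\d\mu_0+\int\psi_1^\eps\,\d\mu_1$, differentiation of the potentials integrated against the interpolation via the cut-offs of Lemma \ref{lem:cutoff} and Lemma \ref{lem:dermiste}, the PDEs \eqref{eq:pde1}--\eqref{eq:pde2}, the integrability/continuity statements of Lemma \ref{lem:continuity}, \eqref{eq:tail}, \eqref{eq:finitekin}, and the endpoint limits through \eqref{eq:ent-continuity} are exactly the ingredients of the paper's proof, which arrives at the same intermediate identities $\int\varphi_1^\eps\,\d\mu_1-\int\varphi_0^\eps\,\d\mu_0=-\tfrac12\iint_0^1|\nabla\varphi_t^\eps|^2\rho_t^\eps\,\dt\,\d\mm$ and its analogue for $\psi^\eps$. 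The only (harmless) difference is that you obtain the first representation by averaging the second and third via the parallelogram identity, whereas the paper proves it directly by differentiating $t\mapsto\int\vartheta_t^\eps\rho_t^\eps\,\d\mm$ with the third equation in \eqref{eq:pde2}.
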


\begin{proof}
Fix $\eps > 0$ and let us prove the first identity in \eqref{eq:dyn}. To this aim fix $\bar{x} \in \X$, $R>0$ and let $\nchi_R$ be a cut-off function as in Lemma \ref{lem:cutoff}; recalling that $(\rho_t^\eps) \in AC([0,1],L^2(\X))$ and for all compact set $\mathcal{C} \subset (0,1)$ there exists $M>0$ such that $(\vartheta_t^\eps) \in AC(\mathcal{C},W^{1,2}(\X,e^{-V}\mm))$ with $V = M\sfd^2(\cdot,\bar{x})$, we see that $t \mapsto \int\nchi_R\vartheta_t^\eps\rho_t^\eps\,\d\mm$ belongs to $AC_{loc}((0,1))$ with
\[
\begin{split}
\ddt\int\nchi_R\vartheta_t^\eps\rho_t^\eps\,\d\mm = & \int\nchi_R\Big(-\frac{|\nabla\vartheta_t^\eps|^2}{2} - \frac{\eps^2}{4}\Delta\log\rho_t^\eps - \frac{\eps^2}{8}|\nabla\log\rho_t^\eps|^2\Big)\rho_t^\eps\,\d\mm \\ & - \int\nchi_R\vartheta_t^\eps{\rm div}(\rho_t^\eps\nabla\vartheta_t^\eps)\d\mm \qquad {\rm a.e.}\ t\in(0,1).
\end{split}
\]
Integration by parts formula and integration in time on $[\delta,1-\delta]$ with $\delta \in (0,1/2)$ then yield
\[
\begin{split}
\int\nchi_R\vartheta_{1-\delta}^\eps\rho_{1-\delta}^\eps&\,\d\mm - \int\nchi_R\vartheta_{\delta}^\eps\rho_{\delta}^\eps\,\d\mm = \iint_{\delta}^{1-\delta}\nchi_R\Big(\frac{|\nabla\vartheta_t^\eps|^2}{2} + \frac{\eps^2}{8}|\nabla\log\rho_t^\eps|^2\Big)\rho_t^\eps\,\dt\d\mm \\ & + \iint_{\delta}^{1-\delta}\langle\nabla\nchi_R,\nabla\rho_t^\eps\rangle\,\dt\d\mm + \iint_{\delta}^{1-\delta}\vartheta_t^\eps\langle\nabla\nchi_R,\nabla\vartheta_t^\eps\rangle \rho_t^\eps\,\dt\d\mm.
\end{split}
\]
We claim that the limit as $R \to \infty$ can be carried under the integral signs. For the first summand on the right-hand side this is true by monotonicity, for all the other terms this follows from the dominated convergence theorem. Indeed $\vartheta_{\delta}^\eps\rho_{\delta}^\eps, \vartheta_{1-\delta}^\eps\rho_{1-\delta}^\eps \in L^1(\X)$ by Lemma \ref{lem:continuity}; $|\nabla\rho_t^\eps|$ and $\vartheta_t^\eps|\nabla\vartheta_t^\eps|\rho_t^\eps$ are, locally in t, uniformly bounded by an $L^1(\X)$ function, since
\[
|\nabla\rho_t^\eps| = \rho_t^\eps |\nabla\log\rho_t^\eps|, \qquad |\vartheta_t^\eps||\vartheta_t^\eps\rho_t^\eps| \leq \frac{1}{2}\rho_t^\eps \big(|\vartheta_t^\eps|^2 + |\nabla\vartheta_t^\eps|^2\big)
\]
and because of Lemma \ref{lem:continuity} again; $|\nchi_R|,|\nabla\nchi_R|$ are uniformly bounded in $L^\infty(\X)$ w.r.t.\ $R$ and converge $\mm$-a.e.\ to $1,0$ respectively by construction. Thus, we obtain
\[
\int\vartheta_{1-\delta}^\eps\rho_{1-\delta}^\eps\,\d\mm - \int\vartheta_{\delta}^\eps\rho_{\delta}^\eps\,\d\mm = \iint_{\delta}^{1-\delta}\Big(\frac{|\nabla\vartheta_t^\eps|^2}{2} + \frac{\eps^2}{8}|\nabla\log\rho_t^\eps|^2\Big)\rho_t^\eps\,\dt\d\mm.
\]
Now let $\delta \downarrow 0$: convergence of the right-hand side is trivial by monotonicity. For the left-hand side consider $t \mapsto \int\vartheta_t^\eps\rho_t^\eps\,\d\mm$, use the identity $\vartheta_t^\eps = \psi_t^\eps - \frac{\eps}{2}\log\rho_t^\eps$ and observe that
\[
t \mapsto \int\psi_t^\eps\rho_t^\eps\,\d\mm \quad\textrm{ and }\quad t \mapsto -\frac{\eps}{2}H(\mu_t^\eps \,|\, \mm)
\]
are both continuous at $t=0$, the former by Lemma \ref{lem:continuity} and the latter by \eqref{eq:ent-continuity}. This implies that
\[
\lim_{\delta \downarrow 0}\int\vartheta_{\delta}^\eps\rho_{\delta}^\eps\,\d\mm = \int\psi_0^\eps\rho_0\,\d\mm - \frac{\eps}{2}H(\mu_0\,|\,\mm).
\]
The same argument with the identity $\vartheta_t^\eps = -\varphi_t^\eps + \frac{\eps}{2}\log\rho_t^\eps$ allows us to handle $\int\vartheta_{1-\delta}^\eps\rho_{1-\delta}^\eps\,\d\mm$ too, so that
\[
\begin{split}
- \int\psi_0^\eps\rho_0\,\d\mm & -\int\varphi_1^\eps\rho_1\,\d\mm + \frac{\eps}{2}\Big(H(\mu_0\,|\,\mm) + H(\mu_1\,|\,\mm)\Big) \\ & = \iint_0^1\Big(\frac{|\nabla\vartheta_t^\eps|^2}{2} + \frac{\eps^2}{8}|\nabla\log\rho_t^\eps|^2\Big)\rho_t^\eps\,\dt\d\mm.
\end{split}
\]
Thanks to the identity $\varphi_0^\eps + \psi_0^\eps = \eps\log\rho_0$ in $\supp(\mu_0)$ and the integrability of $\psi_0^\eps\rho_0, \rho_0\log\rho_0$ we deduce that $\varphi_0^\eps\rho_0 \in L^1(\X)$ too. An analogous statement holds in $t=1$ and thus the previous identity is in turn equivalent to
\[
\begin{split}
\int\varphi_0^\eps\rho_0\,\d\mm & + \int\psi_1^\eps\rho_1\,\d\mm - \frac{\eps}{2}\Big(H(\mu_0\,|\,\mm) + H(\mu_1\,|\,\mm)\Big) \\ & = \iint_0^1\Big(\frac{|\nabla\vartheta_t^\eps|^2}{2} + \frac{\eps^2}{8}|\nabla\log\rho_t^\eps|^2\Big)\rho_t^\eps\,\dt\d\mm
\end{split}
\]
and now it is sufficient to observe that by \eqref{eq:def01}
\begin{equation}\label{eq:entcost}
\eps\mathscr{I}_\eps(\mu_0,\mu_1) = \eps H\big( f^\eps \otimes g^\eps\hR^{\eps/2} \,|\, \hR^{\eps/2}\big) = \int\varphi_0^\eps\d\mu_0 + \int\psi_1^\eps\d\mu_1.
\end{equation}
For the second and third identities in \eqref{eq:dyn}, the argument closely follows the one we have just presented. Indeed, it is just a matter of computation to rewrite the continuity equation solved by $(\rho_t^\eps,\vartheta_t^\eps)$ as forward and backward Fokker-Planck equations with velocity fields given by $\nabla\psi_t^\eps$ and $\nabla\varphi_t^\eps$ respectively, i.e.
\begin{subequations}
\begin{align}
\label{eq:ffpe}
\ddt\rho_t^\eps + {\rm div}(\rho_t^\eps\nabla\psi_t^\eps) & = \frac{\eps}{2}\Delta\rho_t^\eps \\
\label{eq:bfpe}
-\ddt\rho_t^\eps + {\rm div}(\rho_t^\eps\nabla\varphi_t^\eps) & = \frac{\eps}{2}\Delta\rho_t^\eps
\end{align}
\end{subequations}
where the time derivatives are meant in the $L^2(\X)$ sense as in \eqref{eq:pde1}. Therefore, arguing as above it is not difficult to see that
\[
\int\varphi_1^\eps\d\mu_1 - \int\varphi_0^\eps\d\mu_0 = -\iint_0^1\frac{|\nabla\varphi_t^\eps|^2}{2}\rho_t^\eps\dt\d\mm
\]
and an analogous identity holds true for $\psi_t^\eps$. Finally using the identities $\varphi_0^\eps + \psi_0^\eps = \eps\log\rho_0$ in $\supp(\mu_0)$ and $\varphi_1^\eps + \psi_1^\eps = \eps\log\rho_1$ in $\supp(\mu_1)$, the entropic cost can be rewritten as
\[
\eps\mathscr{I}_\eps(\mu_0,\mu_1) = \eps H(\mu_0\,|\,\mm) + \int\psi_1^\eps\d\mu_1 - \int\psi_0^\eps\d\mu_0 = \eps H(\mu_1\,|\,\mm) + \int\varphi_0^\eps\d\mu_0 - \int\varphi_1^\eps\d\mu_1,
\]
whence the conclusion.
\end{proof}

\section{Dynamical and dual representations of the entropic cost}

From Proposition \ref{pro:dyn} we notice that the entropic cost can be expressed as the evaluation of an action functional in three different ways: as a purely kinetic energy evaluated along $(\rho_t^\eps,\psi_t^\eps)$ and $(\rho_t^\eps,\varphi_t^\eps)$ or as the sum of kinetic energy and Fisher information along $(\rho_t^\eps,\vartheta_t^\eps)$. As we have just seen, three different `PDEs' are associated to these couples, so that three different minimization problems can be introduced, as already discussed in the Introduction.

The first of them reads as
\[
\inf_{(\eta,v)}\iint_0^1\Big( \frac{|v_t|^2}{2} + \frac{\eps^2}{8}|\nabla\log\eta_t|^2 \Big)\eta_t\dt\d\mm,
\]
the infimum being taken among all solutions $(\eta\mm,v)$ of the continuity equation in the sense of Definition \ref{def:fpe}. In line with the smooth theory, we are now able to prove that, if we add $\frac{\eps}{2}(H(\mu_0\,|\,\mm) + H(\mu_1\,|\,\mm))$, this infimum coincides with $\eps\mathscr{I}_\eps(\mu_0,\mu_1)$, thus providing a first variational representation of the entropic cost and extending \eqref{eq:intbbs2} to the $\RCD$ framework. Moreover, the infimum is attained if and only if $(\eta_t,v_t) = (\rho_t^\eps,\nabla\vartheta_t^\eps)$. We remark that the uniqueness of the minimizers is not stated in \cite{MikamiThieullen08}, \cite{ChGePa16} and \cite{GLR15}.

\begin{Theorem}[Benamou-Brenier formula for the entropic cost, 1st form]\label{thm:main1}
With the same assumptions and notations as in Section \ref{sec:2.2}, for any $\eps>0$ the following holds:
\begin{equation}\label{eq:bbs}
\eps\mathscr{I}_\eps(\mu_0,\mu_1) = \frac{\eps}{2}\Big(H(\mu_0\,|\,\mm) + H(\mu_1\,|\,\mm)\Big) + \min_{(\eta,v)}\bigg\{\iint_0^1\Big( \frac{|v_t|^2}{2} + \frac{\eps^2}{8}|\nabla\log\eta_t|^2 \Big)\eta_t\dt\d\mm\bigg\}
\end{equation}
where the minimum is taken among all couples $(\eta\mm,v)$ solving the continuity equation in the sense of Definition \ref{def:fpe} under the constraint $\eta_0\mm = \mu_0$ and $\eta_1\mm = \mu_1$; moreover, the minimum is attained if and only if $(\eta_t,v_t) = (\rho_t^\eps,\nabla\vartheta_t^\eps)$.
\end{Theorem}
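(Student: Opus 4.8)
The plan is to prove \eqref{eq:bbs} by two matching inequalities and then read off the characterization of the minimizer from strict convexity. The pair $(\rho_t^\eps,\nabla\vartheta_t^\eps)$ satisfies conditions (i)--(iii) of Definition \ref{def:fpe} together with the prescribed marginals: weak continuity and $\rho_t^\eps\le C\mm$ follow from $(\rho_t^\eps)\in C([0,1],L^2(\X))$ and \eqref{eq:tail}, the $L^1$-in-time bound on $t\mapsto\int|\nabla\vartheta_t^\eps|^2\,\d\mu_t^\eps$ from \eqref{eq:finitekin}, and the continuity equation from the third identity in \eqref{eq:pde1}; moreover, by the first line of Proposition \ref{pro:dyn} the action evaluated along this pair is exactly $\eps\mathscr{I}_\eps(\mu_0,\mu_1)-\tfrac\eps2\big(H(\mu_0\,|\,\mm)+H(\mu_1\,|\,\mm)\big)$. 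This gives `$\le$' in \eqref{eq:bbs}, so it remains to prove the lower bound and the uniqueness.

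For the lower bound, let $(\eta\mm,v)$ be admissible with finite action (otherwise there is nothing to prove); then for a.e.\ $t$ one has $\sqrt{\eta_t}\in W^{1,2}_{loc}(\X)$, hence $\eta_t\in W^{1,2}(\X)$ since $\eta_t\le C$, and $v_t\in L^2(\mu_t)$. The idea is to test against the potential $\vartheta_t^\eps$ by completing the square twice, $\mm$-a.e.:
\[
\tfrac12|v_t|^2\eta_t=\tfrac12|v_t-\nabla\vartheta_t^\eps|^2\eta_t+\langle\nabla\vartheta_t^\eps,v_t\rangle\eta_t-\tfrac12|\nabla\vartheta_t^\eps|^2\eta_t,
\]
\[
\tfrac{\eps^2}{8}|\nabla\log\eta_t|^2\eta_t=\tfrac{\eps^2}{8}\big|2\nabla\sqrt{\eta_t}-\sqrt{\eta_t}\,\nabla\log\rho_t^\eps\big|^2+\tfrac{\eps^2}{4}\langle\nabla\log\rho_t^\eps,\nabla\log\eta_t\rangle\eta_t-\tfrac{\eps^2}{8}|\nabla\log\rho_t^\eps|^2\eta_t
\]
(here $\rho_t^\eps>0$ on $(0,1)$ is used, together with $\nabla\eta_t=2\sqrt{\eta_t}\,\nabla\sqrt{\eta_t}$). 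On the other hand, with a cut-off $\nchi_R$ as in Lemma \ref{lem:cutoff} --- so that $\nchi_R\vartheta_t^\eps\in W^{1,2}(\X)$ is absolutely continuous in time on compact subsets of $(0,1)$ --- Lemma \ref{lem:dermiste} (case $c=0$), the Hamilton--Jacobi--Bellman equation for $\vartheta_t^\eps$ in \eqref{eq:pde2}, and an integration by parts in the term carrying $\Delta\log\rho_t^\eps$ give, for a.e.\ $t$,
\[
\frac{\d}{\dt}\int\nchi_R\vartheta_t^\eps\eta_t\,\d\mm=\int\nchi_R\Big(\langle\nabla\vartheta_t^\eps,v_t\rangle-\tfrac12|\nabla\vartheta_t^\eps|^2+\tfrac{\eps^2}{4}\langle\nabla\log\rho_t^\eps,\nabla\log\eta_t\rangle-\tfrac{\eps^2}{8}|\nabla\log\rho_t^\eps|^2\Big)\eta_t\,\d\mm+E_R(t),
\]
where $E_R(t)$ collects the terms containing $\nabla\nchi_R$. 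Summing the two square-completions, multiplying by $\nchi_R\ge0$, integrating in space and over $t\in[\delta,1-\delta]$, and letting $R\to\infty$ --- so that $\iint E_R\to0$ by dominated convergence, using $\mu_t\in\probt\X$, the linear growth of $|\nabla\log\rho_t^\eps|$ from \eqref{eq:lipcontr}, the quadratic growth of $\vartheta_t^\eps$, \eqref{eq:tail} and Lemma \ref{lem:continuity} --- the non-negative defect terms yield
\[
\iint_\delta^{1-\delta}\Big(\tfrac12|v_t|^2+\tfrac{\eps^2}{8}|\nabla\log\eta_t|^2\Big)\eta_t\,\dt\d\mm\ge\int\vartheta_{1-\delta}^\eps\eta_{1-\delta}\,\d\mm-\int\vartheta_\delta^\eps\eta_\delta\,\d\mm.
\]
Finally I let $\delta\downarrow0$: the left-hand side tends to the full action, and --- writing $\vartheta_t^\eps=\psi_t^\eps-\tfrac\eps2\log\rho_t^\eps=-\varphi_t^\eps+\tfrac\eps2\log\rho_t^\eps$ and arguing as in Proposition \ref{pro:dyn}, with $\mu_\delta\to\mu_0$ and $\mu_{1-\delta}\to\mu_1$ narrowly --- the right-hand side tends to $\int\vartheta_1^\eps\,\d\mu_1-\int\vartheta_0^\eps\,\d\mu_0$, which by $\varphi_0^\eps+\psi_0^\eps=\eps\log\rho_0$ on $\supp(\mu_0)$, $\varphi_1^\eps+\psi_1^\eps=\eps\log\rho_1$ on $\supp(\mu_1)$ and \eqref{eq:entcost} equals $\eps\mathscr{I}_\eps(\mu_0,\mu_1)-\tfrac\eps2\big(H(\mu_0\,|\,\mm)+H(\mu_1\,|\,\mm)\big)$. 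Together with the first paragraph this proves \eqref{eq:bbs} and that the minimum is attained at $(\rho_t^\eps,\nabla\vartheta_t^\eps)$.

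For uniqueness, if $(\eta\mm,v)$ is a minimizer then the two defect terms must vanish $\dt\otimes\mm$-a.e., i.e.\ $v_t=\nabla\vartheta_t^\eps$ and $2\nabla\sqrt{\eta_t}=\sqrt{\eta_t}\,\nabla\log\rho_t^\eps$ $\mm$-a.e.\ for a.e.\ $t$; the latter reads $\nabla\big(\sqrt{\eta_t/\rho_t^\eps}\big)=0$ $\mm$-a.e.\ (the ratio lies in $W^{1,2}_{loc}(\X)$, $\rho_t^\eps$ being locally Lipschitz and positive), whence $\eta_t=\rho_t^\eps$ $\mm$-a.e.\ for a.e.\ $t$ by connectedness of $\X$ and the normalization $\int\eta_t\,\d\mm=\int\rho_t^\eps\,\d\mm=1$, hence for all $t$ by weak continuity, and then $v_t=\nabla\vartheta_t^\eps$ $\mm$-a.e.\ for a.e.\ $t$ because $\rho_t^\eps>0$. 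Conversely $(\rho_t^\eps,\nabla\vartheta_t^\eps)$ is admissible and optimal by the above, so it is the unique minimizer.

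I expect the main obstacle to be the rigorous execution of the displayed derivative identity --- justifying the chain rule of Lemma \ref{lem:dermiste} for $t\mapsto\int\nchi_R\vartheta_t^\eps\eta_t\,\d\mm$ and the integration by parts against the cut-offs --- and, above all, the passage $\delta\downarrow0$ in the boundary terms, where one works with an \emph{arbitrary} admissible density $(\eta_t)$ rather than the special $\rho_t^\eps$ of Proposition \ref{pro:dyn} with its Gaussian tail. Controlling $\int\vartheta_\delta^\eps\eta_\delta\,\d\mm$ and $\int\vartheta_{1-\delta}^\eps\eta_{1-\delta}\,\d\mm$ uniformly in $\delta$ should first require establishing moment bounds along finite-action curves (for instance $\sup_t\int\sfd^2(\cdot,\bar x)\,\d\mu_t<\infty$, a Gronwall consequence of Definition \ref{def:fpe}), in order to upgrade narrow convergence to the equi-integrability needed, in combination with the integrability and equi-boundedness estimates recalled in Section \ref{sec:2.2} and Lemma \ref{lem:continuity}.
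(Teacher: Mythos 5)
Your first inequality and the overall skeleton (test the action against a cut-off version of the potential, complete squares, remove the cut-off, then send the time truncation to the endpoints) are sound, but there is a genuine gap exactly where you locate the ``main obstacle'', and your proposed fix does not close it. You work with the \emph{unregularized} potential $\vartheta_t^\eps$ and claim that $\int\vartheta_{\delta}^\eps\eta_\delta\,\d\mm\to\int\vartheta_0^\eps\,\d\mu_0$ ``arguing as in Proposition \ref{pro:dyn}''. That argument is not available here: in Proposition \ref{pro:dyn} the curve is $\eta_t=\rho_t^\eps$ itself, and the boundary limit rests on the continuity of $t\mapsto\int\psi_t^\eps\rho_t^\eps\,\d\mm$ (Lemma \ref{lem:continuity}) and of $t\mapsto H(\mu_t^\eps\,|\,\mm)$ (see \eqref{eq:ent-continuity}), neither of which makes sense for a generic admissible $\eta_t$. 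The real difficulty is that $\varphi_\delta^\eps=\eps\log\h_{\eps\delta/2}f^\eps$ tends to $-\infty$ outside $\supp(\mu_0)$ as $\delta\downarrow0$ (and symmetrically $\psi^\eps_{1-\delta}$ outside $\supp(\mu_1)$), with no dominating function of quadratic growth uniform in $\delta$, while $\eta_\delta\mm\rightharpoonup\mu_0$ only weakly and may keep mass outside $\supp(\mu_0)$; so the boundary term $-\int\vartheta^\eps_\delta\eta_\delta\,\d\mm$ can a priori drift to $-\infty$ and your liminf inequality degenerates. Second-moment bounds via Gronwall (your suggested remedy) give tightness against functions of quadratic growth, hence do not control this blow-up. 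The paper's proof is built precisely to avoid this: it replaces $\varphi,\psi,\vartheta$ by $\varphi_t^{\eps,\delta}=\eps\log(f_t^\eps+\delta)$ etc.\ (Lemma \ref{lem:hjb}), uses Young's inequalities \eqref{eq:young1}--\eqref{eq:young2} at fixed $\delta$, so that the boundary limits $t_0\downarrow0$, $t_1\uparrow1$ involve only \emph{bounded} potentials (with $\varphi_0^{\eps,\delta}$ constant outside a compact set), and only then lets $\delta\downarrow0$ against the fixed nice marginals $\mu_0,\mu_1$ by monotonicity. This regularization also removes the need for moment bounds in the cut-off removal (everything is controlled by $\eta_t\le C$, $\|\nabla\phi^\delta_t\|_{L^2}$ and $\int|v_t|^2\,\d\mu_t\in L^1(0,1)$), whereas your error terms involve $|\nabla\log\rho_t^\eps|\eta_t$ with only linear-growth control, so you would additionally have to prove and insert the uniform second-moment lemma you only sketch.

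Your uniqueness argument has a second, related gap. Declaring that a minimizer must have vanishing defect terms presupposes that the two square-completion defects can be carried through the limits $R\to\infty$ and $\delta\downarrow0$ \emph{with equality at the endpoints}, i.e.\ it needs precisely the problematic boundary convergence above to hold as an identity for the competitor $\eta$, not just as the favourable inequality. This is why the paper does not argue via the equality case in Young's inequality in the noncompact setting (see Remark \ref{rem:1}, where that route is explicitly reserved for the compact, regular case with no cut-off and no $t_0,t_1,\delta$ limits) and instead proves uniqueness by a separate convexity argument: the action is built from $\Phi(x,y)=y^2/x$, which is linear only along rays through the origin, and equality in convexity along the segment joining $(\overline\eta,\overline m)$ to $(\rho^\eps,\rho^\eps\nabla\vartheta^\eps)$ forces collinearity of $(\overline\eta_t,|\nabla\overline\eta_t|)$ with $(\rho^\eps_t,|\nabla\rho^\eps_t|)$ and then $\overline\eta_t=\rho^\eps_t$, with $\overline v_t=\nabla\vartheta_t^\eps$ by strict convexity in the velocity. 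If you want to keep your ``vanishing defects'' route you must either justify the endpoint identities for an arbitrary minimizer (which is essentially as hard as the gap in the lower bound) or switch to such a convexity argument.
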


\begin{proof}

\noindent{\bf Inequality $\geq$ in \eqref{eq:bbs}}. Proposition \ref{pro:dyn}, the third in \eqref{eq:pde1} and the last in \eqref{eq:finitekin} imply that $(\rho_t^\eps,\nabla\vartheta_t^\eps)$ solves the continuity equation in the sense of Definition \ref{def:fpe}. 

\noindent{\bf Inequality $\leq$ in \eqref{eq:bbs}}. Start noticing that the assumptions on $\mu_0,\mu_1$ grant that $\mathscr{I}_\eps(\mu_0,\mu_1)$ is finite. Thus, given a solution $(\eta,v)$ of the continuity equation in the sense of the statement, without loss of generality we can assume that
\begin{equation}\label{eq:finito}
\iint_0^1\Big( \frac{|v_t|^2}{2} + \frac{\eps^2}{8}|\nabla\log\eta_t|^2 \Big)\eta_t\dt\d\mm < +\infty.
\end{equation}
Now fix $x \in \X$, $R>0$ and pick a cut-off function $\nchi_R$ as in Lemma \ref{lem:cutoff}; take also $\delta>0$ and define for all $t \in [0,1]$
\[
\varphi_t^{\eps,\delta} := \eps\log(f_t^\eps + \delta) \qquad \psi_t^{\eps,\delta} := \eps\log(g_t^\eps + \delta) \qquad \vartheta_t^{\eps,\delta} := \frac{1}{2}\big(\psi_t^{\eps,\delta} - \varphi_t^{\eps,\delta}\big).
\]
By Lemma \ref{lem:hjb} $(\nchi_R\vartheta_t^{\eps,\delta}) \in AC_{loc}((0,1),L^2(\X)) \cap L^\infty_{loc}((0,1),W^{1,2}(\X))$. Thus, given $t_0,t_1 \in (0,1)$ with $t_0 < t_1$, Lemma \ref{lem:dermiste} applies to $(\eta_t\mm)$ and $t \mapsto \nchi_R\vartheta_t^{\eps,\delta}$ on $[t_0,t_1]$, whence
\[
\frac{\d}{\d s}\Big(\int\nchi_R\vartheta_s^{\eps,\delta} \eta_s\d\mm\Big)\restr{s=t} = \int \nchi_R\big(\frac{\d}{\d s}\vartheta_s^{\eps,\delta}\restr{s=t}\big)\eta_t\d\mm + \frac{\d}{\d s}\Big(\int\nchi_R\vartheta_t^{\eps,\delta} \eta_s\d\mm \Big)\restr{s=t}
\]
for a.e.\ $t \in [t_0,t_1]$. For the first term on the right-hand side, the fact that $\vartheta_t^{\eps,\delta} = (\psi_t^{\eps,\delta} - \varphi_t^{\eps,\delta})/2$ and the `PDEs' solved by $\psi_t^{\eps,\delta},\varphi_t^{\eps,\delta}$ (namely \eqref{eq:hjb-eta} up to rescaling and change of sign) yield
\[
\begin{split}
\int \nchi_R\big(\frac{\d}{\d s}\vartheta_s^{\eps,\delta}\restr{s=t}\big)\eta_t\d\mm & = -\int\nchi_R\Big(\frac{|\nabla\psi_t^{\eps,\delta}|^2}{4} + \frac{\eps}{4}\Delta\psi_t^{\eps,\delta} + \frac{|\nabla\varphi_t^{\eps,\delta}|^2}{4} + \frac{\eps}{4}\Delta\varphi_t^{\eps,\delta}\Big)\eta_t\d\mm \\ & = \int\nchi_R\Big(-\frac{|\nabla\psi_t^{\eps,\delta}|^2}{4} - \frac{|\nabla\varphi_t^{\eps,\delta}|^2}{4} + \frac{\eps}{4}\langle\nabla(\psi_t^{\eps,\delta} + \varphi_t^{\eps,\delta}),\nabla\log\eta_t\rangle\Big)\eta_t\d\mm \\ & \, \quad + \frac{\eps}{4}\int \langle\nabla(\psi_t^{\eps,\delta} + \varphi_t^{\eps,\delta}),\nabla\nchi_R\rangle\,\eta_t\d\mm
\end{split}
\]
and by Young's inequality
\begin{equation}\label{eq:young1}
\eps\langle\nabla(\psi_t^{\eps,\delta} + \varphi_t^{\eps,\delta}),\nabla\log\eta_t\rangle \leq \frac{1}{2}|\nabla(\psi_t^{\eps,\delta} + \varphi_t^{\eps,\delta})|^2 + \frac{\eps^2}{2}|\nabla\log\eta_t|^2.
\end{equation}
On the other hand, the fact that $(\eta,v)$ is a solution of the continuity equation and $\nchi_R\vartheta_t^{\eps,\delta} \in W^{1,2}(\X)$ imply by Lemma \ref{lem:dermiste} that
\[
\frac{\d}{\d s}\Big(\int\nchi_R\vartheta_t^{\eps,\delta} \eta_s\d\mm \Big)\restr{s=t} = \frac{1}{2}\int\Big(\nchi_R\langle\nabla(\psi_t^{\eps,\delta} - \varphi_t^{\eps,\delta}),v_t\rangle + (\psi_t^{\eps,\delta} - \varphi_t^{\eps,\delta}) \langle\nabla\nchi_R,v_t\rangle\Big)\eta_t\d\mm
\]
and by Young's inequality
\begin{equation}\label{eq:young2}
\langle\nabla(\psi_t^{\eps,\delta} - \varphi_t^{\eps,\delta}),v_t\rangle \leq \frac{1}{4}|\nabla( \psi_t^{\eps,\delta} - \varphi_t^{\eps,\delta})|^2 + |v_t|^2.
\end{equation}
Plugging these observations together and integrating over $[t_0,t_1]$ we deduce that
\[
\begin{split}
\int\nchi_R\vartheta_{t_1}^{\eps,\delta}\eta_{t_1}\,\d\mm - \int\nchi_R\vartheta_{t_0}^{\eps,\delta}\eta_{t_0}\,\d\mm & \leq \iint_{t_0}^{t_1}\nchi_R\Big(\frac{|v_t|^2}{2} + \frac{\eps^2}{8}|\nabla\log\eta_t|^2\Big)\eta_t\dt\d\mm \\ & \, \quad + \frac{\eps}{4}\iint_{t_0}^{t_1} \langle\nabla(\psi_t^{\eps,\delta} + \varphi_t^{\eps,\delta}),\nabla\nchi_R\rangle\,\eta_t\dt\d\mm \\ & \, \quad + \frac{1}{2}\iint_{t_0}^{t_1} (\psi_t^{\eps,\delta} - \varphi_t^{\eps,\delta}) \langle\nabla\nchi_R,v_t\rangle\,\eta_t\dt\d\mm.
\end{split}
\]
Now notice that
\[
\lim_{R \to \infty} \iint_{t_0}^{t_1}\nchi_R\Big(\frac{|v_t|^2}{2} + \frac{\eps^2}{8}|\nabla\log\eta_t|^2\Big)\eta_t\dt\d\mm = \iint_{t_0}^{t_1} \Big(\frac{|v_t|^2}{2} + \frac{\eps^2}{8}|\nabla\log\eta_t|^2\Big)\eta_t\dt\d\mm
\]
by monotonicity. Moreover, on the left-hand side $\vartheta_t^{\eps,\delta}\eta_t \in L^1(\X)$ for $t = t_0,t_1$ by (iv) in Lemma \ref{lem:hjb} and the very definition of $\vartheta_t^{\eps,\delta}$; on the right-hand side we know that $\| |\nabla\nchi_R| \|_{L^\infty(\X)}$ is uniformly bounded w.r.t.\ $R$, $|\nabla(\psi_t^{\eps,\delta} + \varphi_t^{\eps,\delta})|\eta_t \in L^1([t_0,t_1] \times \X,\dt \otimes \mm)$ by Lemma \ref{lem:hjb} again,
\[
|(\psi_t^{\eps,\delta} - \varphi_t^{\eps,\delta}) \langle\nabla\nchi_R,v_t\rangle\,\eta_t| \leq \frac{1}{2}|\nabla\nchi_R|\eta_t\big( |\psi_t^{\eps,\delta} - \varphi_t^{\eps,\delta}|^2 + |v_t|^2\big)
\]
and $|v_t|^2\eta_t \in L^1([0,1] \times \X,\dt \otimes \mm)$ by \eqref{eq:finito}. Thus the dominated convergence theorem applies to all the remaining terms and, since $|\nabla\nchi_R| \to 0$ $\mm$-a.e.\ as $R \to \infty$, this implies
\begin{equation}\label{eq:calibro35}
\int\vartheta_{t_1}^{\eps,\delta}\eta_{t_1}\,\d\mm - \int\vartheta_{t_0}^{\eps,\delta}\eta_{t_0}\,\d\mm \leq \iint_{t_0}^{t_1}\Big(\frac{|v_t|^2}{2} + \frac{\eps^2}{8}|\nabla\log\eta_t|^2\Big)\eta_t\dt\d\mm.
\end{equation}
Passing to the limit as $t_0 \downarrow 0$, the convergence of the right-hand side is trivial by monotonicity. As regards the left-hand side, notice that
\begin{equation}\label{eq:t0}
\bigg|\int\psi_{t_0}^{\eps,\delta}\eta_{t_0}\,\d\mm - \int\psi_0^{\eps,\delta}\,\d\mu_0 \bigg| \leq \int |\psi_{t_0}^{\eps,\delta} - \psi_0^{\eps,\delta}|\eta_{t_0}\,\d\mm + \bigg|\int\psi_0^{\eps,\delta}\eta_{t_0}\,\d\mm - \int\psi_0^{\eps,\delta}\,\d\mu_0 \bigg|
\end{equation}
and the fact that $\log$ is Lipschitz on $[\delta,\infty)$ together with the fact that $\eta_t \leq C$ for all $t \in [0,1]$ entails
\[
\int |\psi_{t_0}^{\eps,\delta} - \psi_0^{\eps,\delta}|\eta_{t_0}\,\d\mm \leq C'\|g_{t_0}^\eps - g_0^\eps\|_{L^2(\X)},
\]
so that the first term on the right-hand side of \eqref{eq:t0} vanishes as $t_0 \downarrow 0$, since $t \mapsto g_t^\eps$ is $L^2$-continuous. As regards the second one, it also disappears: indeed $g_0^\eps \in C_b(\X)$ (as a consequence of $g^\eps \in L^\infty(\X)$ with compact support, \eqref{eq:lipreg} and the maximum principle for the heat equation) so that $\psi_0^{\eps,\delta} \in C_b(\X)$ too, and $\eta_{t_0}\mm \rightharpoonup \mu_0$ as $t_0 \downarrow 0$ by definition.

For $\varphi_t^{\eps,\delta}$ we argue in an analogous way: we write
\[
\bigg|\int\varphi_{t_0}^{\eps,\delta}\eta_{t_0}\,\d\mm - \int\varphi_0^{\eps,\delta}\,\d\mu_0 \bigg| \leq \int |\varphi_{t_0}^{\eps,\delta} - \varphi_0^{\eps,\delta}|\eta_{t_0}\,\d\mm + \bigg|\int\varphi_0^{\eps,\delta}\eta_{t_0}\,\d\mm - \int\varphi_0^{\eps,\delta}\,\d\mu_0 \bigg|
\]
and get rid of the first term on the right-hand side as just done for $\psi_t^{\eps,\delta}$. For the second one, let us stress that in general neither $f^\eps$ nor $\varphi_0^{\eps,\delta}$ belong to $C_b(\X)$ and thus narrow convergence can not be applied. However, $f^\eps$ has compact support, since so does $\rho_0$: this means that $\varphi_0^{\eps,\delta}$ is constant outside a bounded set and thus for all $\alpha > 0$ we can find $h \in C_b(\X)$ such that $\|\varphi_0^{\eps,\delta} - h\|_{L^1(\X)} < \alpha$; therefore, using again $\eta_t \leq C$, we get
\[
\begin{split}
\bigg|\int\varphi_0^{\eps,\delta}\eta_{t_0}\,\d\mm - \int\varphi_0^{\eps,\delta}\,\d\mu_0 \bigg| & \leq \int |\varphi_0^{\eps,\delta} - h|\eta_{t_0}\,\d\mm + \bigg|\int h\eta_{t_0}\,\d\mm - \int h\,\d\mu_0 \bigg| + \int |\varphi_0^{\eps,\delta} - h|\,\d\mu_0 \\ & \leq 2C\alpha + \bigg|\int h\eta_{t_0}\,\d\mm - \int h\,\d\mu_0 \bigg|
\end{split}
\]
and the arbitrariness of $\alpha$ together with $\eta_{t_0}\mm \rightharpoonup \mu_0$ as $t_0 \downarrow 0$ allows us to conclude in the same way as for $\psi_t^{\eps,\delta}$. Since $\vartheta_t^{\eps,\delta} := (\psi_t^{\eps,\delta} - \varphi_t^{\eps,\delta})/2$, this implies
\[
\lim_{t_0 \downarrow 0}\int\vartheta_{t_0}^{\eps,\delta}\eta_{t_0}\,\d\mm = \int\vartheta_0^{\eps,\delta}\,\d\mu_0.
\]
In a completely analogous way we can handle the case $t_1 \uparrow 1$, whence from \eqref{eq:calibro35}
\begin{equation}\label{eq:quasibbs}
\int\vartheta_1^{\eps,\delta}\,\d\mu_1 - \int\vartheta_0^{\eps,\delta}\,\d\mu_0 \leq \iint_0^1\Big(\frac{|v_t|^2}{2} + \frac{\eps^2}{8}|\nabla\log\eta_t|^2\Big)\eta_t\dt\d\mm.
\end{equation}
Now use the identity $\vartheta_1^{\eps,\delta} = \psi_1^{\eps,\delta} - \frac{\eps}{2}\log((f_1^\eps + \delta)(g^\eps + \delta))$ and observe that by monotonicity
\[
\lim_{\delta \downarrow 0}\int\psi_1^{\eps,\delta}\,\d\mu_1 = \int\psi_1^\eps\,\d\mu_1, \qquad\qquad \lim_{\delta \downarrow 0}\int\log((f_1^\eps + \delta)(g^\eps + \delta))\d\mu_1 = H(\mu_1\,|\,\mm).
\]
Moreover both limits are finite, since the assumptions on $\rho_1$ grant that $H(\mu_1\,|\,\mm) < \infty$, while the first one can be rewritten as $\eps H(\mu_1\,|\,\mm) - \int\varphi_1^\eps\,\d\mu_1$ and $\varphi_1^\eps\rho_1 \in L^1(\X)$ by Lemma \ref{lem:continuity}. From the identity $\vartheta_0^{\eps,\delta} = -\varphi_0^{\eps,\delta} + \frac{\eps}{2}\log((f^\eps + \delta)(g_0^\eps + \delta))$ an analogous statement holds for $\int\vartheta_0^{\eps,\delta}\,\d\mu_0$ as $\delta \downarrow 0$ too, so that combining these remarks with \eqref{eq:quasibbs} and \eqref{eq:entcost} we get
\[
\eps\mathscr{I}_\eps(\mu_0,\mu_1) \leq \frac{\eps}{2}\Big(H(\mu_0\,|\,\mm) + H(\mu_1\,|\,\mm)\Big) + \iint_0^1\Big( \frac{|v_t|^2}{2} + \frac{\eps^2}{8}|\nabla\log\eta_t|^2 \Big)\eta_t\dt\d\mm,
\]
whence the conclusion by taking the infimum.

\noindent{\bf Uniqueness of the minimizer}. As a first step, notice that if $(\eta_t\mm,v_t)$ solves the continuity equation as in Definition \ref{def:fpe} and we introduce the `momentum' variable $m_t := \eta_t v_t$, then
\[
\Gamma := \{ (\eta,m) \,:\, (\eta\mm,v) \textrm{ solves the continuity equation},\, \eta_0\mm = \mu_0,\, \eta_1\mm = \mu_1 \}
\]
is closed w.r.t.\ convex combination. Secondly, the function $\Phi : [0,\infty) \times \R \to [0,\infty]$ defined by
\[
\Phi(x,y) := \left\{\begin{array}{ll}
\displaystyle{\frac{y^2}{x}} & \qquad \text{ if } x > 0, \\
0 & \qquad \text{ if } x=y=0,\\
+\infty & \qquad \text{ otherwise } 
\end{array}\right.
\]
is convex and lower semicontinuous. Therefore the functionals $\mathscr{K},\mathscr{F},\mathscr{A} : \Gamma \to [0,\infty]$ defined as
\[
\mathscr{K}(\eta,m) := \frac{1}{2}\iint_0^1 \frac{|m_t|^2}{\eta_t}\,\dt\d\mm, \quad \mathscr{F}(\eta,m) := \frac{\eps^2}{8} \iint_0^1 \frac{|\nabla\eta_t|^2}{\eta_t}\,\dt\d\mm, \quad \mathscr{A} := \mathscr{K} + \mathscr{F}
\]
(in case the set of $t$'s where $\eta_t \notin W^{1,2}_{loc}(\X)$ has positive $\mathcal{L}^1$-measure, we set $\mathscr{F}(\eta,m) := +\infty$) are convex too. Thus, if $(\overline{\eta},\overline{v})$ is a minimizer for \eqref{eq:bbs} and $\overline{m}_t := \overline{\eta}_t\overline{v}_t$, then by Proposition \ref{pro:dyn} it follows that $\mathscr{A}(\overline{\eta},\overline{m}) = \mathscr{A}(\rho^\eps,\rho^\eps\nabla\vartheta^\eps)$ and so, by convexity of $\mathscr{A}$,
\[
\mathscr{A}(\eta^\lambda,m^\lambda) = (1-\lambda)\mathscr{A}(\overline{\eta},\overline{m}) + \lambda \mathscr{A}(\rho^\eps,\rho^\eps\nabla\vartheta^\eps) \qquad \forall \lambda \in (0,1),
\]
where $\eta_t^\lambda := (1-\lambda)\eta_t + \lambda\rho_t^\eps$ and $m_t^\lambda := (1-\lambda)m_t + \lambda\rho_t^\eps\nabla\vartheta_t^\eps$. As a byproduct the same identity holds with $\mathscr{F}$ instead of $\mathscr{A}$, since both $\mathscr{K}$ and $\mathscr{F}$ are convex and $\mathscr{A} = \mathscr{K} + \mathscr{F}$, whence
\begin{equation}\label{eq:linearity}
\Phi(\eta_t^\lambda,|\nabla\eta_t^\lambda|) = (1-\lambda)\Phi(\overline{\eta}_t,|\nabla\overline{\eta}_t|) + \lambda \Phi(\rho_t^\eps,|\nabla\rho_t^\eps|).
\end{equation}
Taking into account the fact that $\Phi$ is linear only along the lines passing through the origin, we deduce that $(\overline{\eta}_t,|\nabla\overline{\eta}_t|)$ and $(\rho_t^\eps,|\nabla\rho_t^\eps|)$ are collinear. Since $\rho_t^\eps > 0$, it is not restrictive to assume that the collinearity condition can be expressed as follows: for all $t \in (0,1)$ and $\mm$-a.e.\ $x \in \X$ there exists $\alpha_{x,t} \geq 0$ such that
\begin{equation}\label{eq:collinear}
(\overline{\eta}_t,|\nabla\overline{\eta}_t|) = \alpha_{x,t}(\rho_t^\eps,|\nabla\rho_t^\eps|).
\end{equation}
Since $\alpha_{x,t} = \overline{\eta}_t/\rho_t^\eps$ with $\rho_t^\eps \in W^{1,2}(\X)$ locally bounded away from 0 and $\overline{\eta}_t \in W^{1,2}_{loc}(\X)$ for a.e.\ $t$, this implies that $\alpha_{x,t} \in W^{1,2}_{loc}(\X)$ for a.e.\ $t$. Furthermore, on the one hand by the very definition of $\eta_t^\lambda$ together with \eqref{eq:collinear} we have
\[
\eta_t^\lambda = \big( (1-\lambda)\alpha_{x,t} + \lambda\big)\rho_t^\eps
\]
and on the other hand \eqref{eq:linearity} also implies that $(\eta_t^\lambda,|\nabla\eta_t^\lambda|)$ is collinear with $(\rho_t^\eps,|\nabla\rho_t^\eps|)$: this implies
\[
(\eta_t^\lambda,|\nabla\eta_t^\lambda|) = \big( (1-\lambda)\alpha_{x,t} + \lambda\big)(\rho_t^\eps,|\nabla\rho_t^\eps|).
\]
Considering the gradient of the identity on the first coordinate we see that
\[
|\nabla\eta_t^\lambda|^2 = \big( (1-\lambda)\alpha_{x,t} + \lambda\big)^2|\nabla\rho_t^\eps|^2 + (1-\lambda)^2(\rho_t^\eps)^2|\nabla\alpha_{x,t}|^2 + 2(1-\lambda)\big( (1-\lambda)\alpha_{x,t} + \lambda\big) \rho_t^\eps \langle\nabla\rho_t^\eps,\nabla\alpha_{x,t}\rangle
\]
and plugging the identity on the second coordinate therein this becomes
\begin{equation}\label{eq:alpha}
|\nabla\alpha_{x,t}|^2 = -2\Big(\alpha_{x,t} + \frac{\lambda}{1-\lambda}\Big)\langle\nabla\log\rho_t^\eps,\nabla\alpha_{x,t}\rangle, \qquad \forall \lambda \in (0,1).
\end{equation}
Since $\alpha_{x,t}$ does not depend on $\lambda$, this can be true only if $\langle\nabla\log\rho_t^\eps,\nabla\alpha_{x,t}\rangle = 0$ $\mm$-a.e.\, thus forcing also the left-hand side of \eqref{eq:alpha} to vanish, namely $\alpha_{x,t}$ to be constant. Since $\overline{\eta}_t,\rho_t^\eps$ are probability densities, we conclude that $\alpha_{x,t} \equiv 1$.

Thus $\overline{\eta}_t = \rho_t^\eps$ for all $t \in [0,1]$ and now it is sufficient to use the strict convexity of $(v_t) \mapsto \iint_0^1 |v_t|^2\rho_t^\eps\,\dt\d\mm$ to conclude that $\overline{v}_t = \nabla\vartheta_t^\eps$ for all $t \in [0,1]$.
\end{proof}

\begin{Remark}[Uniqueness of the minimizer]\label{rem:1}
{\rm
As proved in \cite{Tamanini17}, if $\X$ is assumed to be a compact $\RCD^*(K,N)$ space and $\rho_0,\rho_1$ are sufficiently regular (i.e.\ they belong to $\testipp\X$ with the notation used therein), then the fact that the minimum is attained in \eqref{eq:bbs} if and only if $(\eta_t,v_t) = (\rho_t^\eps,\nabla\vartheta_t^\eps)$ is straightforward. This is essentially due to the fact that no cut-off argument is needed and $(\vartheta_t^\eps) \in AC([0,1],W^{1,2}(\X))$, so that $t \mapsto \int\vartheta_t^\eps\eta_t\,\d\mm$ belongs to $AC([0,1])$, no limit as $t_0 \downarrow 0, t_1 \uparrow 1$ and $\delta \downarrow 0$ appears in the previous proof and by the case of equality in \eqref{eq:young1} and \eqref{eq:young2} the infimum is attained if and only if
\[
\nabla(\psi_t^\eps + \varphi_t^\eps) = \eps\nabla\log\eta_t \qquad\textrm{and}\qquad v_t = \frac{1}{2}\nabla(\psi_t^\eps - \varphi_t^\eps),
\]
which completely characterizes the optimal pair $(\eta_t,v_t)$.
}\fr
\end{Remark}

The second and third dynamical variational representation of the entropic cost (and thus the non-smooth analogue of \eqref{eq:intbbs}) are the content of the next result.

\begin{Theorem}[Benamou-Brenier formula for the entropic cost, 2nd form]\label{thm:main1bis}
With the same assumptions and notations as in Section \ref{sec:2.2}, for any $\eps>0$ the following holds:
\begin{itemize}
\item[(i)]
\begin{equation}\label{eq:bbs2}
\eps\mathscr{I}_\eps(\mu_0,\mu_1) = \eps H(\mu_0\,|\,\mm) + \min_{(\nu^+,v^+)}\bigg\{\int_0^1\int\frac{|v^+_t|^2}{2}\d\nu^+_t\dt\bigg\}
\end{equation}
where the minimum is taken among all couples $(\nu^+,v^+)$ solving the forward Fokker-Planck equation in the sense of Definition \ref{def:fpe} with $c=\eps/2$ under the constraint $\nu_0 = \mu_0$ and $\nu_1 = \mu_1$; the minimum is attained at $(\rho_t^\eps\mm,\nabla\psi_t^\eps)$.
\item[(ii)]
\begin{equation}\label{eq:bbs3}
\eps\mathscr{I}_\eps(\mu_0,\mu_1) = \eps H(\mu_1\,|\,\mm) + \min_{(\nu^-,v^-)}\bigg\{\int_0^1\int\frac{|v^-_t|^2}{2}\d\nu^-_t\dt\bigg\}
\end{equation}
where the minimum is taken among all couples $(\nu^-,v^-)$ solving the backward Fokker-Planck equation in the sense of Definition \ref{def:fpe} with $c=\eps/2$ under the constraint $\nu_0 = \mu_0$ and $\nu_1 = \mu_1$; the minimum is attained at $(\rho_t^\eps\mm,\nabla\varphi_t^\eps)$.
\end{itemize}
\end{Theorem}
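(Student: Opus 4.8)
The plan is to follow the structure of the proof of Theorem~\ref{thm:main1}, the simplification being that here only a single Schr\"odinger potential is involved ($\psi^\eps$ for (i), $\varphi^\eps$ for (ii)) and no Fisher information term appears. I will prove (i) and obtain (ii) by the symmetric argument, exchanging the roles of $f^\eps$ and $g^\eps$, of $\varphi^\eps$ and $\psi^\eps$, of $\mu_0$ and $\mu_1$, the forward Fokker--Planck equation \eqref{eq:ffpe} with the backward one \eqref{eq:bfpe}, and the backward Hamilton--Jacobi--Bellman equation solved by $\psi^\eps$ with the forward one solved by $\varphi^\eps$ (see \eqref{eq:pde2}). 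For the inequality ``$\ge$'' in \eqref{eq:bbs2} and for the attainment statement, I will simply check that $(\rho_t^\eps\mm,\nabla\psi_t^\eps)$ is admissible for the minimisation: it solves the forward Fokker--Planck equation in the sense of Definition~\ref{def:fpe} by \eqref{eq:ffpe}, the third relation in \eqref{eq:pde1}, the bound \eqref{eq:tail} and the finite kinetic energy bound in \eqref{eq:finitekin}, and by Proposition~\ref{pro:dyn} its action equals $\eps\mathscr{I}_\eps(\mu_0,\mu_1)-\eps H(\mu_0\,|\,\mm)$; hence the infimum is $\le\eps\mathscr{I}_\eps(\mu_0,\mu_1)-\eps H(\mu_0\,|\,\mm)$, and once the converse is shown this competitor is a minimiser.

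For ``$\le$'' in \eqref{eq:bbs2}, fix an admissible $(\nu^+,v^+)$, which we may assume of finite action since $\mathscr{I}_\eps$ is finite. Fix $x\in\X$, $R>0$, a cut-off $\nchi_R$ as in Lemma~\ref{lem:cutoff} and $\delta>0$, and set $\psi_t^{\eps,\delta}:=\eps\log(g_t^\eps+\delta)$. By Lemma~\ref{lem:hjb} (applied to $u=g^\eps$ after the rescaling $s=\eps(1-t)/2$, so that $\psi_t^{\eps,\delta}=\eps\log(\h_s g^\eps+\delta)$), the curve $(\psi_t^{\eps,\delta})$ is uniformly bounded in $L^\infty(\X)$, lies in the Sobolev/absolute-continuity classes required by Lemma~\ref{lem:dermiste}, satisfies the rescaled backward Hamilton--Jacobi--Bellman equation $-\ddt\psi_t^{\eps,\delta}=\tfrac12|\nabla\psi_t^{\eps,\delta}|^2+\tfrac\eps2\Delta\psi_t^{\eps,\delta}$ with no $\delta$-error (by the chain rule for $\log$), and $\nchi_R\psi_t^{\eps,\delta}\in D(\Delta)$ with compact support. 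Applying Lemma~\ref{lem:dermiste} to $(\nu_t^+)$ and $t\mapsto\nchi_R\psi_t^{\eps,\delta}$ on $[t_0,t_1]\subset(0,1)$, expanding $\d(\nchi_R\psi_t^{\eps,\delta})$ and $\Delta(\nchi_R\psi_t^{\eps,\delta})$ via the Leibniz rules, and using that the two $\tfrac\eps2\Delta\psi_t^{\eps,\delta}$-contributions cancel — this is exactly the duality between the Fokker--Planck and the Hamilton--Jacobi--Bellman equation — one is left, after Young's inequality, with
\[
\ddt\int\nchi_R\psi_t^{\eps,\delta}\,\d\nu_t^+\ \le\ \int\nchi_R\,\frac{|v_t^+|^2}{2}\,\d\nu_t^+\ +\ \mathcal E_R(t),
\]
where $\mathcal E_R(t)$ collects the remainder terms carrying $\nabla\nchi_R$ or $\Delta\nchi_R$.

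Integrating on $[t_0,t_1]$ and letting $R\to\infty$: the term with $\nchi_R$ converges by monotone convergence, while $\int_{t_0}^{t_1}\mathcal E_R(t)\,\dt\to0$ by dominated convergence, using the uniform bound $\|\psi_t^{\eps,\delta}\|_{L^\infty(\X)}\le C$ (so that the $\Delta\nchi_R$-term, supported on $B_{R+1}(x)\setminus B_R(x)$, is negligible because $\nu_t^+$ is a probability measure), the uniform bound on $\||\nabla\nchi_R|\|_{L^\infty(\X)}$, the integrability of $|\nabla\psi_t^{\eps,\delta}|\nu_t^+$ from Lemma~\ref{lem:hjb}(iv), and $|v_t^+|^2\nu_t^+\in L^1([0,1]\times\X)$ from finiteness of the action. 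One then lets $t_0\downarrow0$, $t_1\uparrow1$ and $\delta\downarrow0$ exactly as in the proof of Theorem~\ref{thm:main1}: the endpoint limits $\int\psi_{t_0}^{\eps,\delta}\d\nu_{t_0}^+\to\int\psi_0^{\eps,\delta}\d\mu_0$ and $\int\psi_{t_1}^{\eps,\delta}\d\nu_{t_1}^+\to\int\psi_1^{\eps,\delta}\d\mu_1$ follow from the $L^2$-continuity of $t\mapsto g_t^\eps$ on $[0,1]$, the Lipschitz continuity of $\log$ on $[\delta,\infty)$, the narrow convergence $\nu_t^+\rightharpoonup\mu_0$ (resp.\ $\mu_1$) at the endpoints, the fact that $g_0^\eps\in C_b(\X)$, and the fact that $g^\eps$ is bounded with compact support (so $\psi_1^{\eps,\delta}$ is constant outside a bounded set and is approximated in $L^1$ by $C_b$ functions); the limit $\delta\downarrow0$ is by monotone convergence, together with $\psi_0^\eps\rho_0,\psi_1^\eps\rho_1\in L^1(\X)$, obtained as in Proposition~\ref{pro:dyn} from $\varphi_0^\eps+\psi_0^\eps=\eps\log\rho_0$ on $\supp(\mu_0)$, $\varphi_1^\eps+\psi_1^\eps=\eps\log\rho_1$ on $\supp(\mu_1)$ and the integrability of $\rho_i\log\rho_i$ and $\varphi_i^\eps\rho_i$. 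This yields $\int\psi_1^\eps\d\mu_1-\int\psi_0^\eps\d\mu_0\le\iint_0^1\tfrac12|v_t^+|^2\d\nu_t^+\dt$; since $\int\psi_0^\eps\d\mu_0=\eps H(\mu_0\,|\,\mm)-\int\varphi_0^\eps\d\mu_0$, the left-hand side equals $\eps\mathscr{I}_\eps(\mu_0,\mu_1)-\eps H(\mu_0\,|\,\mm)$ by \eqref{eq:entcost}, and taking the infimum over $(\nu^+,v^+)$ closes (i); (ii) is the symmetric statement.

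I expect the main obstacle to be the bookkeeping of this nested limit ($R\to\infty$, then $t_0\downarrow0$, $t_1\uparrow1$, then $\delta\downarrow0$), and in particular the need to work with the bounded regularisations $\psi_t^{\eps,\delta}$ rather than with $\psi_t^\eps$ itself: because $\psi_t^\eps$ has only quadratic growth in space while a general admissible density merely satisfies $\nu_t^+\le C\mm$, the term $\tfrac\eps2\int\psi_t^\eps\,\Delta\nchi_R\,\d\nu_t^+$ need not vanish as $R\to\infty$ without the truncation by $\delta$. The algebraic heart — the cancellation of the Laplacian terms and the Young inequality — is, by contrast, routine.
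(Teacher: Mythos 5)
Your proposal is correct and follows essentially the same route as the paper: admissibility of $(\rho_t^\eps\mm,\nabla\psi_t^\eps)$ via Proposition \ref{pro:dyn}, \eqref{eq:ffpe} and \eqref{eq:finitekin} for one inequality, and for the other the cut-off $\nchi_R$ plus the $\delta$-regularised potential $\psi_t^{\eps,\delta}$, Lemma \ref{lem:dermiste}, the cancellation of the Laplacian terms, Young's inequality, and the successive limits concluded through $\psi_0^\eps=-\varphi_0^\eps+\eps\log\rho_0$ and \eqref{eq:entcost}. The only cosmetic deviation is that you work on $[t_0,t_1]\subset(0,1)$ and add a (harmless, correctly handled) limit $t_0\downarrow 0$, whereas the paper integrates directly on $[0,t_1]$ since $\psi_t^{\eps,\delta}$ is regular up to $t=0$.
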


\begin{proof}
It is sufficient to prove $(i)$, as $(ii)$ follows by swapping $\mu_0$ and $\mu_1$.

\noindent{\bf Inequality $\geq$ in \eqref{eq:bbs2}}. Proposition \ref{pro:dyn}, \eqref{eq:ffpe} and the second in \eqref{eq:finitekin} imply that $(\rho_t^\eps,\nabla\psi_t^\eps)$ solves the forward Fokker-Planck equation in the sense of Definition \ref{def:fpe}. 

\noindent{\bf Inequality $\leq$ in \eqref{eq:bbs2}}. For sake of notation, let us drop the apex $+$ in $(\nu^+,v^+)$. Fix $x \in \X$, $R>0$ and pick a cut-off function $\nchi_R$ as in Lemma \ref{lem:cutoff}; take also $\delta>0$ and define $\psi_t^{\eps,\delta}$ as in $(i)$. By Lemma \ref{lem:hjb} $(\nchi_R\psi_t^{\eps,\delta}) \in AC_{loc}([0,1),L^2(\X)) \cap L^\infty_{loc}([0,1),W^{1,2}(\X))$ with $(\Delta(\nchi_R\psi_t^{\eps,\delta})) \in L^\infty_{loc}([0,1),L^2(\X))$. Thus, given $t_1 \in [0,1)$, Lemma \ref{lem:dermiste} applies to $(\nu_t)$ and $t \mapsto \nchi_R\psi_t^{\eps,\delta}$ on $[0,t_1]$, whence
\[
\frac{\d}{\d s}\Big(\int\nchi_R\psi_s^{\eps,\delta}\,\d\nu_s\Big)\restr{s=t} = \int \nchi_R\big(\frac{\d}{\d s}\psi_s^{\eps,\delta}\restr{s=t}\big)\d\nu_t + \frac{\d}{\d s}\Big(\int\nchi_R\psi_t^{\eps,\delta} \d\nu_s \Big)\restr{s=t}
\]
for a.e.\ $t \in [0,t_1]$. For the first term on the right-hand side, the fact that $\psi_t^{\eps,\delta}$ solves \eqref{eq:hjb-eta} (up to rescaling and change of sign) yields
\[
\int \nchi_R\big(\frac{\d}{\d s}\psi_s^{\eps,\delta}\restr{s=t}\big)\d\nu_t = -\int\nchi_R\Big(\frac{|\nabla\psi_t^{\eps,\delta}|^2}{2} + \frac{\eps}{2}\Delta\psi_t^{\eps,\delta}\Big)\d\nu_t.
\]
On the other hand, the fact that $(\nu,v)$ is a solution of the forward Fokker-Planck equation with $c = \eps/2$ and $\nchi_R\psi_t^{\eps,\delta} \in D(\Delta)$ imply that
\[
\begin{split}
\frac{\d}{\d s}\Big(\int\nchi_R\psi_t^{\eps,\delta}\,\d\nu_s \Big)\restr{s=t} & = \int\Big(\nchi_R\langle\nabla\psi_t^{\eps,\delta},v_t\rangle + \psi_t^{\eps,\delta} \langle\nabla\nchi_R,v_t\rangle\Big)\d\nu_t \\ & \, \quad + \frac{\eps}{2}\int\Big(\nchi_R\Delta\psi_t^{\eps,\delta} + 2\langle\nabla\nchi_R,\nabla\psi_t^{\eps,\delta} \rangle + \psi_t^{\eps,\delta}\Delta\nchi_R\Big)\d\nu_t
\end{split}
\]
and by Young's inequality
\begin{equation}\label{eq:young3}
\langle\nabla\psi_t^{\eps,\delta},v_t\rangle \leq \frac{1}{2}|\nabla\psi_t^{\eps,\delta}|^2 + \frac{1}{2}|v_t|^2.
\end{equation}
Plugging these observations together and integrating over $[0,t_1]$ we deduce that
\[
\begin{split}
\int\nchi_R\psi_{t_1}^{\eps,\delta}\,\d\nu_{t_1} - \int\nchi_R\psi_0^{\eps,\delta}\,\d\mu_0 & \leq \int_0^{t_1}\int\nchi_R \frac{|v_t|^2}{2}\d\nu_t\dt + \int_0^{t_1}\int\psi_t^{\eps,\delta}\langle\nabla\nchi_R,v_t\rangle\d\nu_t\dt \\ & \, \quad + \frac{\eps}{2}\int_0^{t_1}\int \Big(\psi_t^{\eps,\delta}\Delta\nchi_R + 2\langle\nabla\psi_t^{\eps,\delta},\nabla\nchi_R\rangle \Big)\d\nu_t\dt.
\end{split}
\]
Since Lemma \ref{lem:cutoff} ensures that $\| \Delta\nchi_R \|_{L^\infty(\X)}$ is uniformly bounded w.r.t.\ $R$ and $t \mapsto \int |v_t|^2\d\nu_t$ belongs to $L^1(0,1)$ by Definition \ref{def:fpe}, the argument by dominated convergence explained in the proof of Theorem \ref{thm:main1} applies and thus, passing to the limit as $R \to \infty$, we get
\[
\int\psi_{t_1}^{\eps,\delta}\,\d\nu_{t_1} - \int\psi_0^{\eps,\delta}\,\d\mu_0 \leq \int_0^{t_1}\int\frac{|v_t|^2}{2}\d\nu_t\dt.
\]
Both limits as $t_1 \uparrow 1$ and $\delta \downarrow 0$ can also be handled as in Theorem \ref{thm:main1}, whence
\[
\int\psi_1^\eps\,\d\mu_1 - \int\psi_0^\eps\,\d\mu_0 \leq \int_0^1\int\frac{|v_t|^2}{2}\d\nu_t\dt,
\]
and now it is sufficient to use the identity $\psi_0^\eps = -\varphi_0^\eps + \eps\log\rho_0$ in $\supp(\mu_0)$ in conjunction with \eqref{eq:entcost} to get the conclusion.
\end{proof}

\begin{Remark}[Uniqueness of the drift]{\rm
In the case $\X$ is assumed to be a compact $\RCD^*(K,N)$ space and $\rho_0,\rho_1$ are sufficiently regular, then as proved in \cite{Tamanini17} the drift of the optimal couple is uniquely determined, namely: the minimum is attained in \eqref{eq:bbs2} (resp.\ \eqref{eq:bbs3}) if and only if $v_t = \nabla\psi_t^\eps$ (resp.\ $v_t = \nabla\varphi_t^\eps$). As for Remark \ref{rem:1}, this is essentially due to the fact that no cut-off argument is needed and $(\psi_t^\eps) \in AC([0,1],W^{1,2}(\X))$, so that $t \mapsto \int\psi_t^\eps\eta_t\,\d\mm$ belongs to $AC([0,1])$, no limit as $t_1 \uparrow 1$ and $\delta \downarrow 0$ appears in the previous proof and by the case of equality in \eqref{eq:young3} the infimum is attained if and only if $v_t = \nabla\psi_t^\eps$.
}\fr
\end{Remark}

As already suggested by the proof of Theorem \ref{thm:main1bis}, the duality between Hamilton-Jacobi and continuity equation that appears in optimal transport is here replaced by the duality between forward (resp.\ backward) Hamilton-Jacobi-Bellman and backward (resp.\ forward) Fokker-Planck equation. This will be the content of the next result.

\begin{Theorem}[HJB duality for the entropic cost]\label{thm:duality}
Let $(\X,\sfd,\mm)$ be an $\RCD^*(K,N)$ space with $K \in \R$ and $N < \infty$. Then, given any supersolution $(\phi_t)$ of the backward Hamilton-Jacobi-Bellman equation in the sense of Definition \ref{def:hjb} and any solution $(\nu_t,v_t)$ of the forward Fokker-Planck equation in the sense of Definition \ref{def:fpe} with the same parameter $c$, it holds
\begin{equation}\label{eq:duality1}
\int\phi_1\,\d\nu_1 - \int\phi_0\,\d\nu_0 \leq \frac{1}{2}\int_0^1\int |v_t|^2\,\d\nu_t\dt.
\end{equation}
Analogously, for any supersolution $(\tilde{\phi}_t)$ of the forward Hamilton-Jacobi-Bellman equation in the sense of Definition \ref{def:hjb} and any solution $(\tilde{\nu}_t,\tilde{v}_t)$ of the backward Fokker-Planck equation in the sense of Definition \ref{def:fpe} with the same parameter $c$, we have
\begin{equation}\label{eq:duality2}
\int\tilde{\phi}_0\,\d\tilde{\nu}_0 - \int\tilde{\phi}_1\,\d\tilde{\nu}_1 \leq \frac{1}{2}\int_0^1\int |\tilde{v}_t|^2\,\d\tilde{\nu}_t\dt.
\end{equation}
In particular, with the same assumptions and notations as in Section \ref{sec:2.2}, for any $\eps>0$ the following duality formula holds:
\begin{subequations}
\begin{align}
\label{eq:hjb-backward}
\eps\mathscr{I}_\eps(\mu_0,\mu_1) & = \eps H(\mu_0\,|\,\mm) + \sup\bigg\{\int \phi_1\,\d\mu_1 - \int \phi_0\,\d\mu_0 \bigg\} \\
\label{eq:hjb-forward}
& = \eps H(\mu_1\,|\,\mm) + \sup\bigg\{\int \tilde{\phi}_0\,\d\mu_0 - \int \tilde{\phi}_1\,\d\mu_1 \bigg\}
\end{align}
\end{subequations}
where the supremum is taken among all supersolution of the backward (resp.\ forward) Hamilton-Jacobi-Bellman equation in the sense of Definition \ref{def:hjb} with $c = \eps/2$ in \eqref{eq:hjb-backward} (resp.\ \eqref{eq:hjb-forward}).
\end{Theorem}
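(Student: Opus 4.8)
The plan is to deduce the two duality formulas \eqref{eq:hjb-backward}--\eqref{eq:hjb-forward} from the pointwise inequalities \eqref{eq:duality1}--\eqref{eq:duality2}, which in turn are proved by a cut-off and integration-by-parts argument analogous to the ones in Theorems \ref{thm:main1}--\ref{thm:main1bis}. For \eqref{eq:duality1}, fix a supersolution $(\phi_t)$ of the backward Hamilton-Jacobi-Bellman equation and a solution $(\nu_t,v_t)$ of the forward Fokker-Planck equation with the same $c$, and pick a cut-off $\nchi_R$ as in Lemma \ref{lem:cutoff}. Since $\phi_t\in D(\Delta_{loc})\cap L^\infty(\X)$ with $|\nabla\phi_t|$ and $\Delta\phi_t$ bounded in $L^2(\X)$ uniformly in $t$, the product $\nchi_R\phi_t$ lies in $D(\Delta)$ for every $t$ and $(\nchi_R\phi_t)$ meets the hypotheses of the second part of Lemma \ref{lem:dermiste} relative to $(\nu_t)$; applying that lemma, inserting the differential inequality $\ddt\phi_t\le-\tfrac12|\nabla\phi_t|^2-c\Delta\phi_t$ and expanding $\d(\nchi_R\phi_t)$ and $\Delta(\nchi_R\phi_t)$ by the Leibniz rules, the $c\nchi_R\Delta\phi_t$ terms cancel and Young's inequality $\langle\nabla\phi_t,v_t\rangle\le\tfrac12|\nabla\phi_t|^2+\tfrac12|v_t|^2$ yields, for a.e.\ $t$,
\[
\ddt\int\nchi_R\phi_t\,\d\nu_t\le\int\nchi_R\frac{|v_t|^2}{2}\,\d\nu_t+\int\phi_t\langle\nabla\nchi_R,v_t\rangle\,\d\nu_t+c\int(\phi_t\Delta\nchi_R+2\langle\nabla\nchi_R,\nabla\phi_t\rangle)\,\d\nu_t.
\]
Integrating over $[0,1]$ and sending $R\to\infty$, the three cut-off error terms vanish by dominated convergence --- using that $\| |\nabla\nchi_R| \|_{L^\infty},\|\Delta\nchi_R\|_{L^\infty}$ are bounded uniformly in $R$ while $\nabla\nchi_R,\Delta\nchi_R\to0$ $\mm$-a.e., that $\phi_t$ is bounded, that $|v_t|^2\,\d\nu_t\in L^1([0,1]\times\X)$ by Definition \ref{def:fpe}, that each $\nu_t$ is a probability measure with $\nu_t\le C\mm$, and that $|\nabla\phi_t|^2\in L^1([0,1]\times\X,\dt\otimes\mm)$ --- whereas $\int\nchi_R\phi_i\,\d\nu_i\to\int\phi_i\,\d\nu_i$ for $i=0,1$ and $\int\nchi_R|v_t|^2\,\d\nu_t\to\int|v_t|^2\,\d\nu_t$. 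This proves \eqref{eq:duality1}, and \eqref{eq:duality2} follows by the change of variable $t\mapsto1-t$. The inequality ``$\le$'' in \eqref{eq:hjb-backward} is then immediate: by the proof of Theorem \ref{thm:main1bis} the couple $(\rho_t^\eps\mm,\nabla\psi_t^\eps)$ solves the forward Fokker-Planck equation with $c=\eps/2$, so plugging it into \eqref{eq:duality1} and using the second identity in Proposition \ref{pro:dyn} gives $\int\phi_1\,\d\mu_1-\int\phi_0\,\d\mu_0\le\tfrac12\iint_0^1|\nabla\psi_t^\eps|^2\rho_t^\eps\dt\d\mm=\eps\mathscr{I}_\eps(\mu_0,\mu_1)-\eps H(\mu_0\,|\,\mm)$ for every admissible $(\phi_t)$.

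For the reverse inequality in \eqref{eq:hjb-backward} one produces a nearly optimal family. The naive candidate $\psi_t^\eps$ is not admissible --- it is unbounded, and $|\nabla\psi_t^\eps|$ escapes $L^\infty((0,1),L^2(\X))$ as $t\uparrow1$ --- so for $\delta,\tau>0$ set $\psi_t^{\eps,\delta,\tau}:=\eps\log(\h_{\eps(1-t+\tau)/2}g^\eps+\delta)$ on $[0,1]$. Since the heat time $\eps(1-t+\tau)/2$ ranges in the compact interval $[\eps\tau/2,\eps(1+\tau)/2]\subset(0,\infty)$, Lemma \ref{lem:hjb} (applied to $u=g^\eps$, with the obvious rescaling and time reflection) shows $\psi_t^{\eps,\delta,\tau}$ to be bounded, to belong to $D(\Delta_{loc})$ with $(|\nabla\psi_t^{\eps,\delta,\tau}|),(\Delta\psi_t^{\eps,\delta,\tau})\in L^\infty((0,1),L^2(\X))$ and to $AC([0,1],L^2(\X,e^{-V}\mm))$, and to solve $-\ddt\psi_t^{\eps,\delta,\tau}=\tfrac12|\nabla\psi_t^{\eps,\delta,\tau}|^2+\tfrac\eps2\Delta\psi_t^{\eps,\delta,\tau}$; hence it is an admissible supersolution in the sense of Definition \ref{def:hjb} with $c=\eps/2$, the shift $\tau$ being exactly what keeps the a priori estimates \eqref{eq:apriori} uniform up to $t=1$. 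Letting first $\delta\downarrow0$ (monotone convergence, using $g^\eps>0$ $\mu_1$-a.e.\ and the positivity and continuity of $\h_{\eps/2}g^\eps$) and then $\tau\downarrow0$ ($L^2$-continuity of the heat flow, equiboundedness of the densities, compactness of $\supp\mu_0,\supp\mu_1$), one gets $\int\psi_1^{\eps,\delta,\tau}\,\d\mu_1-\int\psi_0^{\eps,\delta,\tau}\,\d\mu_0\to\int\psi_1^\eps\,\d\mu_1-\int\psi_0^\eps\,\d\mu_0$, which by $\varphi_0^\eps+\psi_0^\eps=\eps\log\rho_0$ on $\supp(\mu_0)$ and \eqref{eq:entcost} equals $\eps\mathscr{I}_\eps(\mu_0,\mu_1)-\eps H(\mu_0\,|\,\mm)$; this gives ``$\ge$'' in \eqref{eq:hjb-backward}, hence equality. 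Formula \eqref{eq:hjb-forward} is obtained symmetrically: one uses \eqref{eq:duality2} in place of \eqref{eq:duality1}, the backward Fokker-Planck solution $(\rho_t^\eps\mm,\nabla\varphi_t^\eps)$ from \eqref{eq:bfpe} together with the third identity in Proposition \ref{pro:dyn} for the easy bound, the competitors $\tilde\phi_t^{\eps,\delta,\tau}:=\eps\log(\h_{\eps(t+\tau)/2}f^\eps+\delta)$ (admissible forward HJB supersolutions, by the same argument), and the identity $\varphi_1^\eps+\psi_1^\eps=\eps\log\rho_1$ on $\supp(\mu_1)$.

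The main obstacle is the bookkeeping in the first step: exhibiting, uniformly in $R$, a fixed $L^1([0,1]\times\X,\dt\otimes\mm)$-majorant for every term generated by the cut-off --- especially the cross terms $\phi_t\langle\nabla\nchi_R,v_t\rangle$ and $\langle\nabla\nchi_R,\nabla\phi_t\rangle$ --- so that dominated convergence is justified. A subtler point, already visible above, is the realisation that the optimal $\psi_t^\eps$ itself violates the regularity demanded by Definition \ref{def:hjb} near $t=1$, which forces the double regularisation by $\delta$ (for boundedness) and by the time-shift $\tau$ (for parabolic a priori estimates up to the endpoint).
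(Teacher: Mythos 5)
Your argument follows the paper's proof almost verbatim: the same cut-off/Lemma \ref{lem:dermiste}/Young scheme for \eqref{eq:duality1}--\eqref{eq:duality2} (with the same $L^1$-majorants justifying dominated convergence as $R\to\infty$), the same use of $(\rho_t^\eps\mm,\nabla\psi_t^\eps)$ as forward Fokker--Planck competitor combined with Proposition \ref{pro:dyn}/\eqref{eq:bbs2} for one inequality in \eqref{eq:hjb-backward}, and the same doubly regularised family $\eps\log(\h_{\eps(1-t)/2+s}g^\eps+\delta)$ (your $s=\eps\tau/2$) made admissible by Lemma \ref{lem:hjb} for the other, concluded via $\psi_0^\eps=-\varphi_0^\eps+\eps\log\rho_0$ and \eqref{eq:entcost}.

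The one step that does not go through as you state it is the order of the final limits. You send $\delta\downarrow0$ first and $\tau\downarrow0$ second. After $\delta\downarrow0$ the $t=1$ term becomes $\int\eps\log(\h_{\eps\tau/2}g^\eps)\,\d\mu_1$, and to obtain the desired inequality you need $\liminf_{\tau\downarrow0}\int\eps\log(\h_{\eps\tau/2}g^\eps)\,\d\mu_1\ge\int\psi_1^\eps\,\d\mu_1$. The tools you invoke ($L^2$-continuity of the heat flow, equibounded densities, compact supports) only yield convergence in measure of $\h_{\eps\tau/2}g^\eps$ to $g^\eps$ together with the uniform upper bound $\eps\log\|g^\eps\|_{L^\infty(\X)}$; since $\log$ is not Lipschitz near $0$, there is no monotonicity in $\tau$ and no evident integrable minorant, so only the reverse Fatou inequality is available, which goes in the wrong direction for this term. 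The repair is exactly the paper's order of limits: keep $\delta>0$ fixed and let the time shift go to $0$ first, so that the Lipschitzness of $\log(\cdot+\delta)$ on $[0,\infty)$, the $L^2$-continuity of $t\mapsto g^\eps_t$, the bound $\eta\le C$ and the compact supports give $\int\phi_i^{\delta,s}\,\d\mu_i\to\int\psi_i^{\eps,\delta}\,\d\mu_i$, and only afterwards send $\delta\downarrow0$ by monotone convergence as in Theorem \ref{thm:main1}. With this swap your proof coincides with the paper's.
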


\begin{proof}
In order to prove \eqref{eq:duality1} let $(\phi_t)$ and $(\nu_t,v_t)$ be as in the statement, fix $x \in \X$, $R>0$ and let $\nchi_R$ be a cut-off function as in Lemma \ref{lem:cutoff}: by Definition \ref{def:hjb} it follows that $(\nchi_R\phi_t) \in AC([0,1],L^2(\X)) \cap L^\infty([0,1],W^{1,2}(\X))$ with $(\Delta(\nchi_R\phi_t)) \in L^\infty([0,1],L^2(\X))$. Thus Lemma \ref{lem:dermiste} applies to $(\nu_t)$ and $t \mapsto \nchi_R\phi_t$ on $[0,1]$, whence
\[
\frac{\d}{\d s}\Big(\int\nchi_R \phi_s\,\d\nu_s\Big)\restr{s=t} = \int \nchi_R\big(\frac{\d}{\d s}\phi_s\restr{s=t}\big)\d\nu_t + \frac{\d}{\d s}\Big(\int\nchi_R \phi_t \d\nu_s \Big)\restr{s=t}
\]
for a.e.\ $t \in [0,1]$. For the first term on the right-hand side, the fact that $\phi_t$ is a supersolution of the backward Hamilton-Jacobi-Bellman equation yields
\[
\int \nchi_R\big(\frac{\d}{\d s}\phi_s\restr{s=t}\big)\d\nu_t \leq -\int\nchi_R\Big(\frac{|\nabla \phi_t|^2}{2} + c\Delta \phi_t\Big)\d\nu_t.
\]
On the other hand, the fact that $(\nu_t,v_t)$ is a solution of the forward Fokker-Planck equation and $\nchi_R \phi_t \in D(\Delta)$ imply that
\[
\begin{split}
\frac{\d}{\d s}\Big(\int\nchi_R \phi_t\,\d\nu_s \Big)\restr{s=t} & = \int\Big(\nchi_R\langle\nabla \phi_t, v_t\rangle + \phi_t \langle\nabla\nchi_R,v_t\rangle\Big)\d\nu_t \\ & \, \quad + c\int\Big(\nchi_R\Delta \phi_t + 2\langle\nabla\nchi_R,\nabla\phi_t\rangle + \phi_t \Delta\nchi_R\Big)\d\nu_t
\end{split}
\]
and by Young's inequality
\[
\langle\nabla\phi_t,v_t\rangle \leq \frac{1}{2}|\nabla\phi_t|^2 + \frac{1}{2}|v_t|^2.
\]
From these observations and integrating over $[0,1]$ we obtain
\[
\begin{split}
\int\nchi_R \phi_1\,\d\nu_1 - \int\nchi_R \phi_0\,\d\nu_0 & \leq \int_0^1\int\nchi_R \frac{|v_t|^2}{2}\d\nu_t\dt + \int_0^1\int \phi_t\langle\nabla\nchi_R,v_t\rangle\d\nu_t\dt \\ & \, \quad + c\int_0^1\int\Big(\phi_t\Delta\nchi_R + 2\langle\nabla\phi_t,\nabla\nchi_R\rangle \Big)\d\nu_t\dt
\end{split}
\]
and, as in the proof of Theorems \ref{thm:main1} and \ref{thm:main1bis}, we can pass the limit $R \to \infty$ under the integral sign by dominated convergence: indeed, all the integrability properties of $\varphi_t^{\eps,\delta},\psi_t^{\eps,\delta},\vartheta_t^{\eps,\delta}$ that we used are still true for $\phi_t$ by Definition \ref{def:hjb}. Keeping in mind that $\nchi_R \to 1$, $|\nabla\nchi_R|,\Delta\nchi_R \to 0$ $\mm$-a.e.\ as $R \to \infty$ and $\nchi_R,|\nchi_R|,\Delta\nchi_R$ are uniformly bounded in $L^\infty(\X)$ w.r.t.\ $R$, \eqref{eq:duality1} follows.

As concerns \eqref{eq:hjb-backward} the `$\geq$' inequality is a direct consequence of \eqref{eq:duality1} and \eqref{eq:bbs2}. For the opposite inequality, notice that $(\psi_t^{\eps,\delta})$ as defined in the proof of Theorem \ref{thm:main1} is a solution to the backward Hamilton-Jacobi-Bellman equation in the sense of Definition \ref{def:hjb} only on the compact subsets of $[0,1)$. Thus let $\delta,s > 0$ and put
\[
\phi_t^{\delta,s} := \eps\log(\h_s g_t^\eps + \delta) = \eps\log(\h_{\eps(1-t)/2 + s} g^\eps + \delta).
\]
By Lemma \ref{lem:hjb} $(\phi_t^{\delta,s})$ is now a solution to the backward Hamilton-Jacobi-Bellman equation in the sense of Definition \ref{def:hjb} on the whole $[0,1]$, whence
\[
\int\phi_1^{\delta,s}\,\d\mu_1 - \int\phi_0^{\delta,s}\,\d\mu_0 \leq \sup\bigg\{\int \phi_1\,\d\mu_1 - \int \phi_0\,\d\mu_0 \bigg\}, \qquad \forall\delta,s > 0,
\]
the supremum being considered as in \eqref{eq:hjb-backward}. The continuity of $s \mapsto \phi_t^{\delta,s}$ in $L^2(\X,e^{-V}\mm)$ as $s \downarrow 0$ for all $t \in [0,1]$ (with $V = M\sfd^2(\cdot,x)$ for some $x \in \X$ and $M>0$) together with the fact that $\mu_0,\mu_1$ have compact support entails
\[
\lim_{s \downarrow 0}\int\phi_i^{\delta,s}\,\d\mu_1 = \int\psi_i^{\eps,\delta}\,\d\mu_i, \qquad i=0,1
\]
and arguing as in Theorem \ref{thm:main1} we can pass to the limit as $\delta \downarrow 0$, thus getting
\[
\int\psi_1^\eps\,\d\mu_1 - \int\psi_0^\eps\,\d\mu_0 \leq \sup\bigg\{\int \phi_1\,\d\mu_1 - \int \phi_0\,\d\mu_0 \bigg\}.
\]
Now it is sufficient to use the identity $\psi_0^\eps = -\varphi_0^\eps + \eps\log\rho_0$ in $\supp(\mu_0)$ together with \eqref{eq:entcost} to conclude.

By reversing time and following the same strategy, \eqref{eq:duality2} and \eqref{eq:hjb-forward} also follow.
\end{proof}

It is not difficult to deduce from the previous result that the entropic cost admits a Kantorovich-like dual representation, the Hopf-Lax semigroup being replaced by \eqref{eq:semigroup} suitably rescaled.

\begin{Theorem}[Kantorovich duality for the entropic cost]\label{thm:main2}
With the same assumptions and notations as in Section \ref{sec:2.2}, for any $\eps>0$ the following duality formula holds:
\[
\begin{split}
\eps\mathscr{I}_\eps(\mu_0,\mu_1) & = \eps H(\mu_0\,|\,\mm) + \sup_{u \in \mathbb{V}}\bigg\{\int u\,\d\mu_1 - \int Q_1^\eps u\,\d\mu_0 \bigg\} \\
& = \eps H(\mu_1\,|\,\mm) + \sup_{u \in \mathbb{V}}\bigg\{\int u\,\d\mu_0 - \int Q_1^\eps u\,\d\mu_1 \bigg\}
\end{split}
\]
where $\mathbb{V} := \{u : \X \to \overline{\R} \,:\, e^{u/\eps} \in L^2 \cap L^\infty(\X)\}$, with the convention $e^{-\infty} = 0$, and
\[
Q_1^\eps u := \eps\log\big(\h_{\eps/2} e^{u/\eps}\big).
\]
\end{Theorem}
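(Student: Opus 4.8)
The plan is to deduce the statement from the Hamilton--Jacobi--Bellman duality of Theorem \ref{thm:duality}, in exactly the same way \eqref{eq:kantdual2} is obtained from \eqref{eq:hjdual}: the curves $t \mapsto \eps\log\big(\h_{\eps(1-t)/2}e^{u/\eps}\big)$ are the natural (maximal) supersolutions of the backward HJB equation with terminal value $u$, and their value at $t=0$ is precisely $Q_1^\eps u$. Since the heat kernel is symmetric, swapping the two coordinates of $\X^2$ maps $\adm(\mu_0,\mu_1)$ onto $\adm(\mu_1,\mu_0)$ and $\hR^{\eps/2}$ onto itself, so $\mathscr{I}_\eps$ is symmetric in its two arguments; it therefore suffices to prove the first identity, the second being the same statement applied to $(\mu_1,\mu_0)$ (equivalently, it follows from \eqref{eq:hjb-forward} in the same way the first follows from \eqref{eq:hjb-backward}).

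For the inequality $\leq$ I would exhibit an optimal competitor in the supremum. Since $g^\eps \in L^\infty(\X)$ has compact support, $g^\eps \in L^2\cap L^\infty(\X)$, so $u := \psi_1^\eps = \eps\log g^\eps$ (set equal to $-\infty$ where $g^\eps = 0$) belongs to $\mathbb{V}$ with $e^{u/\eps} = g^\eps$; consequently $Q_1^\eps u = \eps\log(\h_{\eps/2}g^\eps) = \eps\log g_0^\eps = \psi_0^\eps$ on $\supp(\mm)$, the heat kernel being strictly positive. Therefore
\[
\int u\,\d\mu_1 - \int Q_1^\eps u\,\d\mu_0 = \int\psi_1^\eps\,\d\mu_1 - \int\psi_0^\eps\,\d\mu_0 = \eps\mathscr{I}_\eps(\mu_0,\mu_1) - \eps H(\mu_0\,|\,\mm),
\]
the last equality being \eqref{eq:entcost} combined with the identity $\varphi_0^\eps + \psi_0^\eps = \eps\log\rho_0$ on $\supp(\mu_0)$, exactly as in the proof of Theorem \ref{thm:main1bis}. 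Taking the supremum over $u \in \mathbb{V}$ gives $\leq$.

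For the inequality $\geq$ I would reuse the regularization of the proof of Theorem \ref{thm:duality}. Fix $u \in \mathbb{V}$ and set $v := e^{u/\eps} \in L^2\cap L^\infty(\X)$, $v \geq 0$; we may assume $v \not\equiv 0$, otherwise $\int u\,\d\mu_1 = -\infty$ and there is nothing to prove. For $\delta,s > 0$ put $\phi_t^{\delta,s} := \eps\log(\h_{\eps(1-t)/2 + s}v + \delta)$. Since the heat flow is evaluated at times in $[s,\eps/2+s]$, bounded away from $0$, and the additive $\delta$ keeps the argument of $\log$ away from $0$, Lemma \ref{lem:hjb} shows (after the affine time change and multiplication by $\eps$, exactly as in Theorem \ref{thm:duality}) that $(\phi_t^{\delta,s})$ is a supersolution --- in fact a solution --- of the backward HJB equation with $c = \eps/2$ in the sense of Definition \ref{def:hjb} on the whole $[0,1]$, so \eqref{eq:hjb-backward} yields
\[
\int\phi_1^{\delta,s}\,\d\mu_1 - \int\phi_0^{\delta,s}\,\d\mu_0 \leq \eps\mathscr{I}_\eps(\mu_0,\mu_1) - \eps H(\mu_0\,|\,\mm), \qquad \forall\,\delta,s > 0.
\]
It remains to let $s \downarrow 0$ and then $\delta \downarrow 0$. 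For fixed $\delta$ the functions $\eps\log(\h_r v + \delta)$, $r \in \{s,\eps/2+s\}$, are equibounded (between $\eps\log\delta$ and $\eps\log(\|v\|_{L^\infty(\X)} + \delta)$) and converge in $L^2(\X)$, hence in $\mm$-measure on bounded sets, as $s\downarrow 0$; since $\mu_0,\mu_1$ are absolutely continuous with bounded density and compact support, dominated convergence gives $\int\phi_i^{\delta,s}\,\d\mu_i \to \eps\int\log(\h_{\eps(1-i)/2}v+\delta)\,\d\mu_i$ for $i=0,1$. Letting $\delta\downarrow 0$, monotone convergence produces the limits $\int u\,\d\mu_1$ (for $i=1$, possibly $-\infty$) and $\int Q_1^\eps u\,\d\mu_0$ (for $i=0$), the latter being finite because $\h_{\eps/2}v$ is Lipschitz by \eqref{eq:lipreg}, bounded above by $\|v\|_{L^\infty(\X)}$ by the maximum principle, and strictly positive, hence bounded below by a positive constant on the compact set $\supp(\mu_0)$. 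This gives $\int u\,\d\mu_1 - \int Q_1^\eps u\,\d\mu_0 \leq \eps\mathscr{I}_\eps(\mu_0,\mu_1) - \eps H(\mu_0\,|\,\mm)$ for every $u \in \mathbb{V}$, whence $\geq$. The only genuinely delicate point is the bookkeeping in this double limit --- finiteness of $\int Q_1^\eps u\,\d\mu_0$ and harmlessness of the convention $e^{-\infty}=0$ --- but all the required estimates are already at hand from Lemma \ref{lem:hjb}, \eqref{eq:lipreg} and the maximum principle, so the argument is essentially a direct combination of Theorems \ref{thm:duality} and \ref{thm:main1bis}.
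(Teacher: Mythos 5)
Your proof is correct and follows essentially the same route as the paper: you prove the first identity via the backward HJB duality \eqref{eq:hjb-backward} with the regularized supersolutions $\eps\log(\h_{\eps(1-t)/2+s}e^{u/\eps}+\delta)$ and the optimal competitor $u=\psi_1^\eps$, while the paper proves the (mirror) second identity via \eqref{eq:hjb-forward} with $Q_t^{\eps,\delta,s}u$ and the competitor $\varphi_0^\eps$, the other formula following analogously in both cases. Your extra bookkeeping on the $\delta\downarrow 0$ limit (finiteness of $\int Q_1^\eps u\,\d\mu_0$ via positivity and Lipschitz continuity of $\h_{\eps/2}e^{u/\eps}$ on the compact support of $\mu_0$) is a welcome, slightly more explicit version of what the paper dispatches with ``by monotonicity and the very definition of $\mathbb{V}$''.
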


\begin{proof}
Let us prove the second duality formula, as for the first one the argument is analogous. The `$\leq$' inequality is a trivial consequence of \eqref{eq:entcost}, the identity $\varphi_1^\eps + \psi_1^\eps = \eps\log\rho_1$ in $\supp(\mu_1)$ and the facts that $\varphi_0^\eps \in \mathbb{V}$, $\varphi_1^\eps = Q_1^\eps\varphi_0^\eps$. For the converse inequality, let $\delta,s > 0$ and define for all $t \in [0,1]$ and $u \in \mathbb{V}$
\[
Q_t^{\eps,\delta,s}u := \eps\log\big(\h_{\eps t/2 + s}e^{u/\eps} + \delta\big).
\]
By Lemma \ref{lem:hjb} $(Q_t^{\eps,\delta,s}u)$ is a solution to the forward Hamilton-Jacobi-Bellman equation in the sense of Definition \ref{def:hjb} on $[0,1]$, so that by \eqref{eq:hjb-forward}
\[
\eps\mathscr{I}_\eps(\mu_0,\mu_1) \geq \eps H(\mu_1\,|\,\mm) + \int Q_0^{\eps,\delta,s}u\,\d\mu_0 - \int Q_1^{\eps,\delta,s}u\,\d\mu_1.
\]
Then let us pass to the limit as $s \downarrow 0$ as just done in the proof of Theorem \ref{thm:duality} and, by monotonicity and the very definition of $\mathbb{V}$, as $\delta \downarrow 0$ too, thus getting
\[
\eps\mathscr{I}_\eps(\mu_0,\mu_1) \geq \eps H(\mu_1\,|\,\mm) + \int u\,\d\mu_0 - \int Q_1^\eps u\,\d\mu_1.
\]
Since this is true for all $u \in \mathbb{V}$, we conclude.
\end{proof}

\bibliographystyle{siam}
\bibliography{biblio}

\end{document}